\newtheorem{thm}{Theorem}[section]
\newtheorem{lem}[thm]{Lemma}
\newtheorem{conj}[thm]{Conjecture}
\newtheorem{cor}[thm]{Corollary}
\newtheorem{defn}[thm]{Definition}
\newtheorem{exam}[thm]{Example}
\theoremstyle{remark}
\newtheorem{rem}[thm]{Remark}
\def \na {\mathbb N}
\def \Z {\mathbb Z}
\def \sq {sequence}
\def \xt {$(X,T)$}
\def \ys {$(Y,S)$}
\def \gt {$(G,\tau)$}
\def \tl {topological}
\def \im {invariant measure}
\def \ds {dynamical system}
\def \dens {\mathsf{dens}}
\def \Per {\mathsf{Per}}
\def \Aper {\mathsf{Aper}}
\def \mmu {\boldsymbol{\mu}}
\numberwithin{equation}{section}
\begin{document}

\title[Odometers and Toeplitz systems revisited]{Odometers and Toeplitz systems revisited in the context of Sarnak's conjecture}

\author{Tomasz Downarowicz and Stanis\l aw Kasjan}

\address{Tomasz Downarowicz: Institute of Mathematics of the Polish Academy of Science, \'Sniadeckich 8, 00-956 Warszawa, Poland and Institute of Mathematics and Computer Science, Wroclaw University of Technology, Wybrze\.ze Wyspia\'nskiego 27, 50-370 Wroc\l aw, Poland}
\email{downar@pwr.wroc.pl}

\address{Stanis\l aw Kasjan: Faculty of Mathematics and Computer Science, Nicolaus Copernicus University, 12/18 Chopin street, 87-100 Toru\'n, Poland}
\email{skasjan@mat.umk.pl}

\subjclass[2010]{Primary: 37B05; Secondary: 37B10, 37A35, 11Y35.}
\keywords{Odometer, Toeplitz flow, Almost 1-1 extension, M\"obius function, Sarnak's conjecture, Entropy}

\begin{abstract}
Although Sarnak's conjecture holds for compact group rotations (irrational rotations, odometers), it is not even known whether it holds for all Jewett-Krieger models of such rotations. In this paper we show that it does, as long as the model is at the same a topological extension. In particular, we reestablish (after \cite{Lem}) that regular Toeplitz systems satisfy Sarnak's conjecture, and, as another consequence,
so do all generalized Sturmian subshifts (not only the classical Sturmian subshift). We also give an example of an irregular Toeplitz subshift which fits our criterion. We give an example of a model of an odometer which is not even Toeplitz (it is weakly mixing), hence does not fit our criterion. However, for this example, we manage to produce a separate proof of Sarnak's conjecture. Next, we provide a class of Toeplitz sequences which fail Sarnak's conjecture (in a weak sense); all these examples have positive entropy. Finally, we examine the example of a Toeplitz sequence from \cite{Lem} (which fails Sarnak's conjecture in the strong sense) and prove that it has positive entropy, as well (this proof has been announced in \cite{Lem}). 

\noindent This paper can be considered a sequel to \cite{Lem}, it also fills some gaps of \cite{survey}.
\end{abstract}

\maketitle

\section{Introduction}
This note results from the discussions the authors held with Mariusz Lema\'nczyk about
topological (in particular symbolic) models of odometers in the
context of Sarnak's conjecture. We refer the readers to the recent paper of Lema\'nczyk
et al \cite{Lem} for results concerning Sarnak's conjecture for Morse systems,
where some indispensable facts concerning regular Toeplitz subshifts were obtained partly
independently and partly jointly, and inspired this work.

First of all, it has been discovered that although regular Toeplitz
subshifts are the best known symbolic models (the precise meaning of a ``model'' will be given in the next section)
of odometers, there are also other possibilities, the existence of which was not fully realized before. In the preceding work of the first author \cite{survey} there are some erroneous statements about regularity, and this note fixes them; there exist \emph{irregular} Toeplitz models of their underlying odometers.
For completeness, we also give examples of models which are not even
Toeplitz (in fact \tl ly mixing). Sarnak's conjecture can be shortly proved
for both regular and irregular Toeplitz models, as long as they are \tl\ extensions of
the modeled odometers. The method uses the sole property of the M\"obius function,
that it is orthogonal to any periodic \sq, otherwise it relies on an easy spectral
argument. As a digression, we apply a similar spectral method (but a different property
of the M\"obius function) to prove Sarnak's conjecture for isomorphic extensions of other
equicontinuous systems, in particular for generalized Sturmian subshifts. Our method fails
for other (e.g. \tl ly weakly mixing) models of equicontinuous systems and the case remains a
challenge. Nonetheless, we are able to successfully apply it to our \tl ly mixing example.
By this occasion we also give relatively simple examples of Toeplitz subshifts
which massively fail Sarnak's conjecture (at many points including Toeplitz \sq s),
showing that just being a union of periodic \sq s is insufficient. We also copy
from \cite{Lem} an example in which a Toeplitz \sq\ fails the conjecture
so bad that the limit inferior of the absolute values of the averages is positive.
As one might expect, the counterexamples have positive entropy, of which we give
detailed proofs (which are skipped in \cite{Lem}).

\section{Preliminaries on Toeplitz systems}

The notation and terminology of this section is consistent with that of \cite{survey}, where we also refer
for references to earlier papers. To make this paper selfcontained, the most crucial
definitions will be repeated. By a \emph{scale} we mean
an increasing \sq\ of positive integers $H=(p_k)_{k\ge 1}$ such that $p_k|p_{k+1}$ for
every $k$. The \emph{adding machine} with scale $H$ is the \tl\ group $G$ obtained as the
inverse limit of the cyclic groups $\Z_{p_k}=\Z/p_k\Z$:
$$
G=\overset{\longleftarrow}{\lim_k}\,\Z_{p_k}.
$$
By an \emph{odometer} we will mean the \tl\ \ds\ \gt\, where $\tau$ is the homeomorphism
$g\mapsto g+\mathbf 1$ of $G$ into itself, where $\mathbf 1=(1,1,\dots)$ is the \tl\ generator
of $G$. The odometer is minimal, equicontinuous and zero-dimensional, and the conjunction of
these three properties characterizes odometers among \tl\ \ds s. Odometers are uniquely ergodic,
with the Haar measure $\lambda$ being the unique \im. For us, an \emph{odometer} also means
the ergodic system $(G,\lambda,\tau)$. By a \emph{(Jewett--Krieger) model} of an ergodic
system we will understand any strictly ergodic (minimal and uniquely ergodic) \tl\ \ds\
isomorphic (for its unique \im\footnote{Convention: \emph{isomorphism} is measure-theoretic while \emph{conjugacy} is \tl.}) to the given ergodic system. Note that there may
(and usually do) exist mutually not conjugate models of the same ergodic
system. For instance, an odometer is a model for itself, but there are other models as well, for
example some symbolic systems over finite alphabets (which are never equicontinuous, so they cannot
be conjugate to the odometer).

We will be mostly concerned with Toeplitz systems\footnote{In the literature they are often called ``Toeplitz flows''; we find this notation confusing, as they are discrete time systems.},
understood slightly more generally than usually, i.e., we will not require that they are symbolic, nonetheless, all our examples will be symbolic. Toeplitz systems can be defined in a multitude of ways,
which is captured in the theorem below. The relevant definitions and proofs can be found in \cite{survey}
(for the reader's convenience, the definitions are also given in footnotes).

\begin{thm}\label{uno}
The following conditions are equivalent for a \tl\ \ds\ \xt. A system satisfying them is called a
\emph{Toeplitz system}.
\begin{enumerate}
	\item \xt\ is the orbit closure of a regularly recurrent point\footnote{A point $x$ is \emph{regularly
	recurrent} if, for every open $U\ni x$, the set of return times to $U$ contains an arithmetic
	progression $n\Z$.};
	\item \xt\ is a minimal almost 1-1 extension\footnote{The factor map $\pi:X\to G$ is \emph{almost 1-1}
	if $\pi^{-1}(\pi(x))=\{x\}$ holds on a residual subset of $X$.} of an odometer \gt;
	\item \xt\ is a semicocycle extension\footnote{A \emph{semicocycle} is a function
	$f:G\to K$ into a compact space, which is continuous on a residual subset of $G$. Let $F$ denote the
	multifunction defined by the closure of the graph of $f$. Let $X_F=\{x\in K^\Z: (\exists g\in G)(\forall 	
	n\in\Z)\ x(n)\in F(g+n)\}$. This set is closed and shift-invariant, and has a unique minimal subset which
	we denote by $X_f$. $X_f$ with the action of the shift is called the \emph{semicocycle extension} (associated	with $f$).} of an odometer $(G,\tau)$.
\end{enumerate}
\end{thm}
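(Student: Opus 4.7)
The plan is a cyclic argument $(1)\Rightarrow(2)\Rightarrow(3)\Rightarrow(1)$. For $(1)\Rightarrow(2)$, starting from a regularly recurrent $x$ whose orbit closure is \xt, I would first observe that regular recurrence implies uniform recurrence, whence minimality of \xt. I would then build a scale by choosing a nested basis $(U_k)$ of clopen neighborhoods of $x$ with $\bigcap_k U_k=\{x\}$ and, refining if necessary, arrange that the return times of $x$ to $U_k$ contain a progression $p_k\Z$ with $p_k\mid p_{k+1}$. Minimality together with disjointness of $U_k,TU_k,\dots,T^{p_k-1}U_k$ shows these form a clopen partition of $X$, so the resulting continuous maps $\pi_k:X\to\Z_{p_k}$ are compatible in $k$ and assemble into a factor map $\pi:X\to G$ onto the odometer with scale $(p_k)$. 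The fibre $\pi^{-1}(\pi(x))$ reduces to $\{x\}$ by the nesting, and since the set of points with singleton fibre is $T$-invariant and $G_\delta$, it is residual by minimality, hence $\pi$ is almost 1-1.

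For $(2)\Rightarrow(3)$, I would assume an almost 1-1 factor map $\pi:X\to G$ is given, set $K=X$, and define $f:G\to X$ to be the unique continuous section on the residual set $R=\{g:|\pi^{-1}(g)|=1\}$, extended arbitrarily outside $R$. Then $f$ is a semicocycle, and almost 1-1-ness implies that the closure of its graph is exactly the graph of the multifunction $F(g)=\pi^{-1}(g)$. The orbit map $\Phi:X\to K^\Z$, $\Phi(x)(n)=T^n x$, embeds \xt\ conjugately into a shift-invariant subset of $X_F$, and minimality of \xt\ identifies $\Phi(X)$ with $X_f$. For $(3)\Rightarrow(1)$, given a semicocycle $f:G\to K$ and its extension $X_f$, I would pick $g_0\in G$ at which $f$ is continuous and any $x\in X_f$ compatible with $g_0$, meaning $x(n)=f(g_0+n)$ for all $n$. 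For a basic open neighborhood $V$ of $x$ constraining only finitely many coordinates, continuity of $f$ at the finitely many relevant points $g_0+n$ yields a clopen $W\ni g_0$ such that $g_0+m\in W$ forces $T^m x\in V$; since $W$ can be chosen as a union of cosets of a finite-index subgroup of $G$, the return times of $g_0$ to $W$ (hence of $x$ to $V$) contain a full arithmetic progression, proving regular recurrence.

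The main obstacle I anticipate is in $(2)\Rightarrow(3)$, specifically verifying that the orbit image $\Phi(X)$ is exactly the \emph{unique} minimal subset of $X_F$ rather than merely one minimal subset. This turns on how faithfully the section $f$ encodes the dynamics off the singular fibres, and requires balancing residuality on the odometer side with minimality on the $X$-side. In the symbolic case it essentially reduces to reading off the unique letter at coordinate zero along $R$, but in the general \tl\ formulation a careful verification is needed; the other two implications, by contrast, are essentially bookkeeping once the right nested Rokhlin-tower structure (respectively the right continuity point of $f$) has been isolated.
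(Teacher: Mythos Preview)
The paper does not give a proof of this theorem; it is quoted as background with the remark ``The relevant definitions and proofs can be found in \cite{survey}'' (the first author's survey on odometers and Toeplitz flows). So there is no in-paper argument to compare against. Your cyclic scheme $(1)\Rightarrow(2)\Rightarrow(3)\Rightarrow(1)$ is the standard one and matches what one finds in that survey.

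Two places in your outline need tightening. In $(1)\Rightarrow(2)$ you begin by \emph{choosing} a nested basis of clopen neighborhoods of $x$; but the theorem is stated for an arbitrary compact metric system, and $X$ is not assumed zero-dimensional (the semicocycle in $(3)$ may take values in, say, an interval). The clopen tower structure is not an input but an output: one first fixes, for a decreasing sequence of open balls around $x$, the \emph{essential} syndetic periods $p_k$ (the least $p$ with $p\Z$ contained in the return set), then shows that the closed sets $A_k=\overline{\{T^{np_k}x:n\in\Z\}}$ satisfy $A_k,TA_k,\dots,T^{p_k-1}A_k$ pairwise disjoint and covering $X$; only then are they automatically clopen and your map $\pi_k$ is continuous. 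Your phrase ``refining if necessary'' gestures at this but does not quite do it, and the disjointness really uses essentiality of the period.

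In $(3)\Rightarrow(1)$ you pick $g_0$ at which $f$ is continuous and then use continuity of $f$ ``at the finitely many relevant points $g_0+n$''. That requires $g_0+n$ to be a continuity point for \emph{every} $n\in\Z$, not just $n=0$; this is available (the continuity set is residual, hence so is the countable intersection of its $\tau$-translates), but it should be stated. With these two fixes your argument goes through, and your flagged worry in $(2)\Rightarrow(3)$ resolves exactly as you guessed: $\Phi(X)$ is a minimal subset of $X_F$ by minimality of \xt, and $X_F$ has a unique minimal subset, so $\Phi(X)=X_f$.
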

The conditions (1) and (2) are additionally related:

\begin{thm}\label{dos}
If $\pi$ is the almost 1-1 factor map from a Toeplitz system \xt\ to an odometer \gt\ then $x\in X$
is regularly recurrent if and only if $\pi^{-1}(\pi(x))=\{x\}$.
\end{thm}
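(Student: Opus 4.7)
My plan is to handle each direction of the equivalence separately.

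For the direction $\pi^{-1}(\pi(x)) = \{x\} \Rightarrow x$ regularly recurrent: since $\pi$ is continuous between compact Hausdorff spaces, it is closed. Given an open $U \ni x$, the singleton-fibre hypothesis makes $V := G \setminus \pi(X \setminus U)$ an open neighbourhood of $g := \pi(x)$ with $\pi^{-1}(V) \subseteq U$. The clopen subgroups $U_k := \ker(G \to \Z/p_k\Z)$ form a basis of neighbourhoods of $0$ in the odometer, so $g + U_k \subseteq V$ for some $k$. Then $\tau^{jp_k}g \in g + U_k \subseteq V$ for every $j$, hence $T^{jp_k}x \in \pi^{-1}(V) \subseteq U$, exhibiting $p_k\Z$ as an arithmetic progression of return times to $U$.

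For the direction $x$ regularly recurrent $\Rightarrow \pi^{-1}(\pi(x)) = \{x\}$: pick $y \in \pi^{-1}(\pi(x))$. Minimality of the Toeplitz system (part of Theorem~\ref{uno}) supplies $(m_j)$ with $T^{m_j}x \to y$; pushing through $\pi$ gives $m_j\mathbf 1 \to 0$ in $G$, which in the inverse-limit structure means that for every $k$ there is $J_k$ with $p_k \mid m_j$ for all $j \ge J_k$. The remainder of the argument will consist in showing $T^{m_j}x \to x$, since this forces $y = x$. To that end it suffices to establish the lemma: for every $\epsilon > 0$ there is $k$ with $T^{jp_k}x \in B(x, \epsilon)$ for all $j \in \Z$. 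Granted this, the condition $p_k \mid m_j$ for $j \ge J_k$ yields $T^{m_j}x \in B(x, \epsilon)$ eventually, and letting $\epsilon \to 0$ gives $y = x$.

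I expect the lemma to be the main obstacle. Regular recurrence directly produces some period $N$ with $T^{jN}x \in B(x, \epsilon)$ for all $j$, and uniform continuity of $\pi$ combined with the clopen-subgroup structure of $G$ forces $p_k \mid N$ for some $k$ -- but this divisibility is in the wrong direction for the argument. The resolution is to exploit that $(G, \tau)$ is the maximal equicontinuous factor of $(X, T)$, and hence uniquely determined up to conjugacy. Building an intrinsic scale $(N_k)$ from the regular recurrence of $x$ (take $N_k$ with $T^{jN_k}x \in B(x, 1/k)$ for all $j$ and $N_k \mid N_{k+1}$) yields an odometer $\overset{\longleftarrow}{\lim_k}\Z/N_k$ canonically conjugate to $G$; cofinality of the two scales then provides, for each $k$, some $p_{k'}$ with $N_k \mid p_{k'}$, so $p_{k'}\Z \subseteq N_k\Z$ and $T^{jp_{k'}}x \in B(x, 1/k)$ for every $j$, which is precisely the lemma.
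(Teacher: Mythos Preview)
The paper does not give its own proof of this theorem; it is listed among the preliminaries with proofs deferred to \cite{survey}. Assessing your argument directly:

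The first direction (singleton fibre $\Rightarrow$ regularly recurrent) is correct and cleanly argued.

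The second direction has a genuine gap. Your reduction to the lemma is fine, and you correctly identify the obstacle (regular recurrence hands you a period $N$ with $p_k \mid N$, not $N \mid p_{k'}$). But your proposed resolution fails: the claim that the intrinsic odometer $\overset{\longleftarrow}{\lim_k}\,\Z/N_k\Z$ is ``canonically conjugate to $G$'' is false in general, because an arbitrary choice of recurrence periods $N_k$ can carry prime factors absent from the scale $(p_k)$. For a concrete counterexample take the trivial Toeplitz system $X = G = \Z_2$ (the $2$-adic odometer) with $x = 0$: the recurrence periods for the ball $2^{l}\Z_2$ are precisely the multiples of $2^{l}$, so $N_k := 3\cdot 2^{k}$ meets both of your requirements (it is a period for $B(0,1/k)\supseteq 2^k\Z_2$, and $N_k\mid N_{k+1}$), yet $\overset{\longleftarrow}{\lim_k}\,\Z/(3\cdot 2^k)\Z \cong \Z/3\Z \times \Z_2$ is not conjugate to $\Z_2$, and no $p_{k'}=2^{k'}$ is divisible by any $N_k$. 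The missing ingredient is a proof that your intrinsic odometer is actually a \emph{factor} of $(X,T)$; without a factor map there is no reason for it to be dominated by the maximal equicontinuous factor, so the cofinality step collapses.

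The argument in \cite{survey} repairs this by working not with arbitrary recurrence periods but with \emph{essential} ones: periods $N$ for which the closed sets $T^r\bigl(\,\overline{\{T^{jN}x:j\in\Z\}}\,\bigr)$, $r=0,\dots,N-1$, genuinely partition $X$. This makes $\Z/N\Z$ an honest equicontinuous factor of $(X,T)$ and hence of $(G,\tau)$, forcing $N \mid p_{k'}$ for some $k'$; since the set $\overline{\{T^{jN}x:j\in\Z\}}$ sits inside the $\epsilon$-ball about $x$, your lemma follows.
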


In Theorem \ref{uno}, the odometer \gt\ appearing in (2) is the \emph{maximal equicontinuous factor}\footnote{Any other equicontinuous factor of \xt\ factors through \gt.} of \xt. The adding machine $G$ appearing in (3) (with the action of $\tau$) is the maximal equicontinuous factor of \xt\ if and only if the semicocycle has the additional property of being \emph{invariant under no rotations}\footnote{A semicocycle $f$ on an adding machine $G$ is \emph{invariant under no rotations} if $F\circ(\cdot+g)=F\implies g=\mathbf 0$.}. Otherwise, the maximal equicontinuous factor of \xt\ acts on a quotient group of $G$\footnote{This quotient group is $G/H$, where $H=\{g: F\circ(\cdot+g)=F\}$.} and \gt\ is not even a factor of \xt\ (let alone maximal equicontinuous). We can strengthen condition (3) as follows:

\begin{thm}\label{tres}
Every Toeplitz system is conjugate to a semicocycle extension of its maximal equicontinuous factor.
\end{thm}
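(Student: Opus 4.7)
My plan is to start from characterization (2) in Theorem~\ref{uno}: let $\pi:X\to G$ be the almost 1-1 factor map onto the maximal equicontinuous factor $G$. I will build an explicit semicocycle $f:G\to K$ whose associated set $X_f$ is conjugate to $X$. This mimics Theorem~\ref{uno}(3), but reads the semicocycle off $\pi$ directly rather than off some larger odometer dominating $G$.

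First I embed $X$ into a shift space $K^{\mathbb Z}$: in the symbolic case $K$ is simply the alphabet and the embedding is the identity, while in general I take $K=X$ with the orbit embedding $x\mapsto (T^nx)_{n\in\mathbb Z}$, which is a topological conjugacy of $(X,T)$ onto its image. Let $R=\{g\in G:\pi^{-1}(g)\text{ is a singleton}\}$; this set is $\tau$-invariant because $\pi T=\tau\pi$ and $T$ is a homeomorphism, and it is residual in $G$ by the almost 1-1 hypothesis. For $g\in R$ write $\pi^{-1}(g)=\{x_g\}$ and define $f(g)=x_g(0)$; extend $f$ arbitrarily on $G\setminus R$. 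I then verify that $f|_R$ is continuous: if $g_m\to g$ with all $g_m,g\in R$, any limit point of $(x_{g_m})$ in the compact space $X$ lies in $\pi^{-1}(g)=\{x_g\}$, so $x_{g_m}\to x_g$ and therefore $f(g_m)\to f(g)$. Thus $f$ is a semicocycle on $G$.

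Let $F$ denote the closed-graph multifunction of $f$. I claim that, via the chosen embedding, $X=X_f$ inside $K^{\mathbb Z}$. For the inclusion $X\subseteq X_F$, fix $x\in X$ with $g=\pi(x)$; since $\pi^{-1}(R)$ is residual, hence dense, in $X$, I approximate $x=\lim x_m$ with $\pi(x_m)=g_m\in R$. Then $g_m+n\in R$ for every $n$, and $f(g_m+n)=x_m(n)\to x(n)$, giving $x(n)\in F(g+n)$ for all $n$. Since $X$ is minimal, it is a minimal subset of $X_F$, and by the uniqueness assertion folded into the definition of $X_f$ (footnote to Theorem~\ref{uno}(3)) I conclude $X=X_f$. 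Finally, because $G$ is by construction the maximal equicontinuous factor of $X=X_f$, the remark following Theorem~\ref{uno} forces $f$ to be invariant under no rotations, confirming that $G$ is genuinely the odometer canonically attached to this semicocycle extension.

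The step I expect to need the most care is the passage from ``$X$ is a minimal subset of $X_F$'' to ``$X=X_f$'', i.e.\ uniqueness of the minimal subset of $X_F$. I can either quote the general fact embedded in the footnote to Theorem~\ref{uno}(3), or verify it directly by observing that any minimal $Y\subseteq X_F$ carries a natural factor map $Y\to G$ and, sharing the same semicocycle data as $X$, must coincide with $X$ on the fibers over the residual set $R$, forcing $Y=X$ by minimality. A secondary technical point worth checking is that the residual set $R\subseteq G$ really is residual in $G$ (not only that $\pi^{-1}(R)$ is residual in $X$); this is standard for almost 1-1 extensions of equicontinuous minimal systems but should be acknowledged.
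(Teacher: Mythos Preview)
The paper does not prove Theorem~\ref{tres}; it is stated in the preliminaries with the blanket sentence ``The relevant definitions and proofs can be found in \cite{survey}'', so there is no in-paper proof to compare against. Your argument is essentially the standard construction one finds in that survey: read the semicocycle off the almost 1-1 factor map $\pi$ by setting $f(g)=x_g(0)$ on the singleton-fiber set $R$, and identify $X$ with the unique minimal subset of $X_F$. The reasoning you give is correct, including your direct verification of uniqueness of the minimal subset via the observation that over each $g\in R$ the set $X_F$ has a single point, namely the orbit-embedded $x_g$.

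Two small points worth tightening. First, when you ``extend $f$ arbitrarily on $G\setminus R$'', the closure of the graph of the extended $f$ may strictly contain that of $f|_R$; to match the footnote's definition of $F$ cleanly, either choose the extension so that $f(g)\in F(g)$ for every $g$ (possible since $F(g)\neq\emptyset$), or simply work with the closure of the graph of $f|_R$ throughout. Second, your final worry about residuality of $R$ in $G$ (as opposed to $\pi^{-1}(R)$ in $X$) is handled by noting that $g\mapsto\operatorname{diam}\pi^{-1}(g)$ is upper semicontinuous, so $R$ is $G_\delta$, and its $\pi$-preimage is dense in $X$, hence $R=\pi(\pi^{-1}(R))$ is dense in $G$; this is the standard argument for almost 1-1 extensions of minimal systems.
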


\medskip
An important class of Toeplitz systems, called \emph{regular}\footnote{The coincidence of this term with ``regular recurrence'' is incidental: the latter term was coined independently from those who invented ``regular Toeplitz systems''.}, is described by the conditions given below:

\begin{thm}\label{cinco}
Let $\pi$ be the almost 1-1 factor map from a Toeplitz system \xt\ to an odometer \gt. The following conditions are equivalent:
\begin{enumerate}
	\item The set of points $g\in G$ such that $\#\pi^{-1}(g)=1$ has full measure $\lambda$;
	\item The set of regularly recurrent points in \xt\ has full measure for every \im\ on $X$;
	\item The set of discontinuities $D_{\!f}$ of the relevant semicocycle $f$ on $G$ has $\lambda$
	measure zero.
\end{enumerate}
\end{thm}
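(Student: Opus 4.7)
The plan is to split the three-way equivalence into two parts. For $(1)\Leftrightarrow(2)$, note that Theorem~\ref{dos} identifies the set of regularly recurrent points in $X$ with $\pi^{-1}(G_1)$, where $G_1=\{g\in G:\#\pi^{-1}(g)=1\}$. Since \gt\ is uniquely ergodic with unique invariant measure $\lambda$, every invariant measure $\mu$ on $X$ pushes forward under $\pi$ to $\lambda$, so $\mu(\pi^{-1}(G_1))=\lambda(G_1)$ for every such $\mu$, and the equivalence is immediate.

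For $(1)\Leftrightarrow(3)$ I invoke Theorem~\ref{tres} to identify \xt\ with the semicocycle extension $X_f\subseteq K^\Z$ of \gt, choosing $f$ to be \emph{natural} in the sense that the closed set $\mathrm{graph}(F)$ coincides with $\overline{\{(\iota(n),x_0(n)):n\in\Z\}}$ for a fixed Toeplitz point $x_0\in X_f$ and the canonical dense embedding $\iota:\Z\hookrightarrow G$. (Any $f$ whose values on $G\setminus\iota(\Z)$ lie in the vertical fibres of this closure produces the same multifunction $F$.) Put $\widetilde D=\bigcup_{n\in\Z}(D_f-n)$; this is a Borel $\tau$-invariant set, and by $\Z$-invariance of $\lambda$ and countable subadditivity it is $\lambda$-null whenever $\lambda(D_f)=0$. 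For $g\notin\widetilde D$ every $F(g+n)$ is the singleton $\{f(g+n)\}$, hence the only candidate element of $\pi^{-1}(g)$ is the point $\sigma_f(g)\in K^\Z$ with $\sigma_f(g)(m)=f(g+m)$. Thus $G\setminus\widetilde D\subseteq G_1$, which already gives $(3)\Rightarrow(1)$.

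The converse rests on the pointwise inclusion $D_f\subseteq G\setminus G_1$. Fix $g\in D_f$ and pick $k_1\neq k_2$ in $F(g)$. By naturality, each pair $(g,k_i)$ is the limit of some $(\iota(n_j^{(i)}),x_0(n_j^{(i)}))$ with $n_j^{(i)}\in\Z$; passing to subsequences in the compact space $K^\Z$, the shifted points $T^{n_j^{(i)}}x_0$ converge to limits $y^{(i)}\in X_f$. Continuity of $\pi$ gives $\pi(y^{(i)})=g$, while reading off the zero coordinate yields $y^{(i)}(0)=k_i$, so $y^{(1)}\neq y^{(2)}$ are two preimages of $g$. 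Consequently $\lambda(D_f)\leq\lambda(G\setminus G_1)$, and $(1)\Rightarrow(3)$ follows. The one delicate ingredient is precisely the naturality of $f$: that the graph closure of $f|_{\iota(\Z)}$ already equals $\mathrm{graph}(F)$, so that every value in every $F(g)$ can be approximated along $\iota(\Z)$. Once this (standard) feature of the semicocycle model provided by Theorem~\ref{tres} is in place, both halves of $(1)\Leftrightarrow(3)$ reduce to clean point-set arguments, and the main obstacle is simply justifying this choice rigorously within the framework of the quoted theorems.
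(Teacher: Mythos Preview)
The paper does not supply its own proof of Theorem~\ref{cinco}; it is listed among the preliminary facts with proofs deferred to \cite{survey}. So there is nothing to compare against directly.

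That said, your argument is correct and is essentially the standard one. The equivalence $(1)\Leftrightarrow(2)$ is exactly Theorem~\ref{dos} together with $\pi_*\mu=\lambda$ for every invariant $\mu$ (note that $G_1$ is a $G_\delta$, hence Borel, so the equality $\mu(\pi^{-1}(G_1))=\lambda(G_1)$ is legitimate). For $(1)\Leftrightarrow(3)$ your two inclusions $G\setminus\widetilde D\subseteq G_1$ and $D_f\subseteq G\setminus G_1$ are the right ones; the only point worth making explicit in the first is that $\pi$ is surjective (minimality), so the unique candidate $\sigma_f(g)$ really lies in $X_f$. The ``naturality'' of $f$ that you flag is precisely how the semicocycle behind Theorem~\ref{tres} is built in \cite{survey}: one fixes a Toeplitz point $x_0$ over $\mathbf 0$, sets $f(\iota(n))=x_0(n)$ on the dense orbit, and takes $F$ to be the closure of this graph. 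So your caveat is well placed but is not a gap---it is just the point where one must unwind the cited construction.
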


The following is obvious by (1):
\begin{thm}\label{siete}
A regular Toeplitz system \xt\ is strictly ergodic and it is isomorphic to its maximal equicontinuous factor \gt, and the isomorphism is provided by the almost 1-1 \tl\ factor map $\pi$.
\end{thm}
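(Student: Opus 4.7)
The plan is to leverage condition (1) of Theorem \ref{cinco} — that $\lambda(G_0)=1$, where $G_0=\{g\in G:\#\pi^{-1}(g)=1\}$ — together with two facts already on the table: the odometer \gt\ is uniquely ergodic with Haar measure $\lambda$, and every Toeplitz system is automatically minimal (condition (1) of Theorem \ref{uno}). So the only remaining content is unique ergodicity of \xt\ and the fact that its unique invariant measure corresponds to $\lambda$ under $\pi$.

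For unique ergodicity I would take an arbitrary $T$-invariant Borel probability $\mu$ on $X$ (one exists by compactness). Its push-forward $\pi_*\mu$ is $\tau$-invariant, hence equals $\lambda$. Disintegrating along $\pi$,
$$
\mu=\int_G \mu_g\,d\lambda(g),
$$
with each $\mu_g$ supported on $\pi^{-1}(g)$. On the full-$\lambda$-measure set $G_0$ the fibre is a singleton $\{\sigma(g)\}$, forcing $\mu_g=\delta_{\sigma(g)}$ for $\lambda$-almost every $g$. Hence $\mu=\sigma_*\lambda$, independently of the choice of $\mu$, which gives unique ergodicity and simultaneously identifies the unique invariant measure as the image of $\lambda$ under the Borel section $\sigma$.

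To see that $\pi$ itself realises the measure-theoretic isomorphism $(X,\mu,T)\to(G,\lambda,\tau)$, I would note that $\pi$ restricted to $X_0:=\pi^{-1}(G_0)$ is a Borel bijection onto $G_0$, both $X_0$ and $G_0$ have full measure in their respective spaces, and the intertwining $\pi\circ T=\tau\circ\pi$ is already part of the factor-map hypothesis. Measurability of the inverse $\sigma:G_0\to X_0$ follows from a standard Borel-selection argument, since $\pi$ restricted to $X_0$ is a continuous injection between Borel sets in compact metric spaces.

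The only step requiring any real care is the disintegration / Borel-selection bookkeeping; one can bypass disintegration altogether by comparing two invariant measures $\mu_1,\mu_2$ directly, each projecting to $\lambda$ and each concentrated on $X_0$ where $\pi$ is injective, so that both must equal $\sigma_*\lambda$. Either way, once Theorem \ref{cinco}(1) is unpacked, the conclusion is, as the statement advertises, essentially automatic.
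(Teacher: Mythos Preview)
Your proposal is correct and follows the same route as the paper: the paper's entire proof is the single remark ``The following is obvious by (1)'', referring to condition (1) of Theorem~\ref{cinco}. Your write-up simply unpacks this ``obvious'' step with the standard push-forward/disintegration argument, which is precisely what is implicit in the paper's one-line justification.
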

In particular, a regular Toeplitz system is a model for the underlying odometer. Unless $\pi$ is 1-1 everywhere, \xt\ is not conjugate to \gt.

In \cite{survey} Theorem 13.1 (5) and (6) it is claimed that a strictly ergodic Toeplitz system
isomorphic to its maximal equicontinuous factor is necessarily regular. Moreover, it is claimed
that strict ergodicity need not be assumed if an isomorphism exists for some \im. Unfortunately, the statement (even with strict ergodicity assumed) is false. Relevant counterexample is provided in the following sections.
\medskip

Let us return to the general case. In symbolic systems regularly recurrent points are called \emph{Toeplitz \sq s}\footnote{Regular recurrence takes on the form $(\forall n\in\Z)(\exists{p\in\na})(\forall m\in\Z)\ x(n)=x(n+mp)$.}. Toeplitz subshifts (and Toeplitz \sq s) were the first examples of almost 1-1 extensions of odometers and they are the most important. In this class we have an additional simplification:

\begin{thm}\label{cuatro}
Every Toeplitz subshift over a finite alphabet $\Lambda$ is conjugate to a semicocycle extension of its maximal equicontinuous factor with the semicocycle taking values in $\Lambda$.
\end{thm}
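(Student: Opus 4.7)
\medskip
\noindent\textbf{Proof plan.} The statement is a symbolic-alphabet strengthening of Theorem \ref{tres}: one must replace the generic compact target of the semicocycle by the finite alphabet $\Lambda$ itself. Let \xt\ be the Toeplitz subshift with factor map $\pi:X\to G$ to its maximal equicontinuous factor \gt, which by Theorems \ref{uno}--\ref{tres} is an odometer and an almost 1-1 factor of \xt. Let $G_0=\{g\in G:\#\pi^{-1}(g)=1\}$; by the almost 1-1 property $G_0$ is a residual (dense $G_\delta$) subset of $G$. The plan is to define $f:G\to\Lambda$ by setting $f(g)=y(0)$ for the unique $y\in\pi^{-1}(g)$ when $g\in G_0$, and assigning $f(g)$ arbitrarily (say, a fixed symbol in $\Lambda$) when $g\in G\setminus G_0$. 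Then verify that $f$ is a semicocycle and that the associated $X_f$ coincides with $X$.

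\medskip\noindent\textbf{Step 1 (semicocycle).} I would first check that $f$ is continuous at every $g\in G_0$, which makes it a genuine semicocycle. If $g_j\to g$ in $G$ and $g\in G_0$, pick any preimages $y_j\in\pi^{-1}(g_j)$. By compactness of $X$ every cluster point $y'$ of $(y_j)$ satisfies $\pi(y')=g$, hence $y'=y$, the unique preimage of $g$. Thus $y_j\to y$ in $X$; projecting onto the $0$-th coordinate yields $f(g_j)\to y(0)=f(g)$ whenever $g_j\in G_0$ (and the arbitrarily-assigned values off $G_0$ are irrelevant for continuity on $G_0$).

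\medskip\noindent\textbf{Step 2 ($X\subseteq X_F$).} With $F$ the multifunction whose graph is the closure of the graph of $f$ (equivalently, of $f\!\restriction\! G_0$), I would show $X\subseteq X_F$. Fix $y\in X$ and set $g=\pi(y)$. For each $n\in\Z$, by Theorem \ref{dos} the Toeplitz \sq s in $X$ are exactly the $\pi$-preimages of $G_0$; they form a dense set in $X$ (the orbit of any regularly recurrent point lies in this set and, by minimality, is dense). Choose Toeplitz \sq s $y_j^{(n)}\to T^n y$ in $X$. Then $\pi(y_j^{(n)})\in G_0$, $\pi(y_j^{(n)})\to g+n$, and by continuity of the zeroth-coordinate projection $f(\pi(y_j^{(n)}))=y_j^{(n)}(0)\to (T^n y)(0)=y(n)$. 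Hence $(g+n,y(n))$ lies in the closure of the graph of $f$, so $y(n)\in F(g+n)$ for all $n$, giving $y\in X_F$.

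\medskip\noindent\textbf{Step 3 (conclusion by minimality).} The inclusion $X\subseteq X_F$ together with minimality of \xt\ identifies $X$ as a minimal $T$-invariant closed subset of $X_F$. Since $X_F$ has, by the footnote to Theorem \ref{uno}, a \emph{unique} minimal subset $X_f$, we conclude $X=X_f$. Thus \xt\ is literally equal (not just conjugate) to the semicocycle extension associated with the $\Lambda$-valued semicocycle $f$, proving the theorem.

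\medskip\noindent\textbf{Main obstacle.} The construction of $f$ is forced by the symbolic structure, so the only real work is Step 2: producing, at each position $n$, an approximation of $T^n y$ by Toeplitz \sq s and combining continuity of $\pi$ with continuity of the alphabet-valued coordinate projection. Everything else---continuity at $G_0$, and the final identification $X=X_f$---follows from soft minimality/compactness arguments and the definitions from Theorem \ref{uno}.
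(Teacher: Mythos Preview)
The paper does not prove this theorem; it is quoted (together with Theorems \ref{uno}--\ref{tres} and \ref{cinco}--\ref{irreg}) from \cite{survey}. So there is no ``paper's own proof'' to compare against. Your approach---read off the $0$th coordinate over the singleton fibers and recognize $X$ as the minimal subset $X_f$---is indeed the standard one.

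There is, however, one genuine slip. You set $f(g)$ equal to a \emph{fixed} symbol $a\in\Lambda$ for $g\notin G_0$ and then claim these values are ``irrelevant for continuity on $G_0$''. They are not. The set $G\setminus G_0$ is $\tau$-invariant, and as soon as it is nonempty (i.e., $X$ is not conjugate to $G$) it contains a full orbit, hence is dense in the minimal system $G$. Therefore every $g\in G_0$ is a limit of points $g_j\in G\setminus G_0$, along which $f(g_j)\equiv a$; unless $f(g)=a$ this destroys continuity of $f$ at $g$. Worse, with this definition the closure of the graph of $f$ contains $G\times\{a\}$, so $a\in F(g)$ for every $g$, the constant sequence $a^{\Z}$ lies in $X_F$, and $X_F$ acquires a second minimal subset $\{a^{\Z}\}$---so your Step~3 appeal to the footnote (unique minimal subset of $X_F$, which is stated for genuine semicocycles) collapses. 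Your parenthetical ``equivalently, of $f\!\restriction\!G_0$'' in Step~2 is likewise false under this definition.

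The fix is immediate: for \emph{every} $g\in G$ choose some $y\in\pi^{-1}(g)$ and put $f(g)=y(0)$. Your own compactness argument in Step~1 then applies to \emph{all} sequences $g_j\to g\in G_0$ (whatever preimages $y_j$ were chosen, every cluster point maps to $g$, hence equals the unique preimage), so $f$ is continuous at each point of $G_0$ and is a bona fide $\Lambda$-valued semicocycle. With this choice one also checks that the closure of the full graph coincides with the closure of the graph of $f|_{G_0}$ (approximate any $y\in\pi^{-1}(g)$ by Toeplitz sequences as in your Step~2), so the ``equivalently'' becomes correct, and Steps~2--3 go through verbatim.
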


The advantage of having the finite-valued semicocycle is that its set of discontinuities $D_f\subset G$
is then closed. The elements of Toeplitz subshifts have specific structure, as described below.

\begin{defn} Let \xt\ be a Toeplitz subshift and let \gt\ be the maximal equicontinuous factor
of \xt. For $x\in X$ and $p\in\na$ we denote
$$
\Per_{p}(x)=\{n\in\Z: (\forall m\in\Z)\ x(n)=x(n+mp)\}, \ \ \Aper(x)=\Z\setminus\bigcup_{p\in\na}\Per_{p}(x).
$$
and call these sets the \emph{$p$-periodic part} and \emph{aperiodic part}, respectively.
\end{defn}

The union of periodic parts will not change if we unite over a scale $(p_k)_{k\in\na}$ of the adding machine $G$ (then the union is increasing). It is important to know that $\Per_{p_k}(x)$ and $\Aper(x)$
depend only on $\pi(x)$. Clearly, $x$ is a Toeplitz \sq\ if and only if its aperiodic part is empty.

Recall that for a set $A\subset\Z$ the \emph{forward and backward densities} of $A$ are defined as $\dens^+(A)=\lim_n \frac1n{\#(A\cap[0,n-1])}$, $\dens^-(A)=\lim_n\frac1n{\#(A\cap[-n,-1])}$,
respectively (provided the limits exist) and in case they coincide we call them the \emph{density} of $A$
and denote by $\dens(A)$.

\begin{thm}\label{irreg}
Let \xt\ be a Toeplitz subshift. Then, for every $p\in\na$, $\dens(\Per_p(x))$ exists and is constant throughout $X$. Let $d=1-\sup_k\dens(\Per_{p_k}(x))$. Then
\begin{enumerate}
  \item $d=\lambda(D_f)$;
  \item $\dens(\Aper(x))\le d$ for every $x\in X$;
	\item $\dens(\Aper(x))=d$ for $\mu$-almost every $x\in X$, for every \im\ $\mu$ on $X$.
\end{enumerate}
\end{thm}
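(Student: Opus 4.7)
The plan is to work with a $\Lambda$-valued semicocycle representation $f\colon G\to\Lambda$ of $(X,T)$ over its maximal equicontinuous factor $(G,\tau)$ (guaranteed by Theorem~\ref{cuatro}) and to translate the combinatorial question into a measure-theoretic one on $G$. Let $F\colon G\to 2^\Lambda$ be the upper semicontinuous multifunction whose graph is the closure of the graph of $f$, so $F(h)=\{f(h)\}$ precisely off the closed set $D_f$. For each $k$ write $G_k=\ker(\pi_k\colon G\to\Z_{p_k})$ and set
$$
E_k=\{h\in G : \exists\,a\in\Lambda\text{ with }a\in F(h')\text{ for all }h'\in h+G_k\}.
$$
Upper semicontinuity of $F$ makes $E_k$ clopen, and its manifest $G_k$-invariance forces $E_k=\pi_k^{-1}(S_k)$ for some $S_k\subseteq\Z_{p_k}$, giving $\lambda(E_k)=|S_k|/p_k$. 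The first key step is the bi-implication
$$
n\in\Per_{p_k}(x)\iff\pi(x)+n\mathbf 1\in E_k,
$$
whose $\Rightarrow$ direction is upper semicontinuity of $F$ applied to the value $x(n)$ (constant along $n+p_k\Z$, whose image in $G$ is dense in the coset $\pi(x)+n\mathbf 1+G_k$), and whose $\Leftarrow$ direction uses that $x$ lies in the minimal subsystem $X_f\subseteq X_F$ to pin down $x$'s values even at orbit positions landing in $D_f$. The right-hand side depends only on $\pi(x)$, so $\Per_{p_k}(x)$ is a union of exactly $|S_k|$ arithmetic progressions modulo $p_k$; hence $\dens(\Per_{p_k}(x))$ exists, is constant in $x$, and equals $\lambda(E_k)$.

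For (1), the $E_k$ form an increasing sequence of clopen sets, so $d=1-\sup_k\lambda(E_k)=\lambda(G\setminus\bigcup_k E_k)$. The inclusion $G\setminus D_f\subseteq\bigcup_k E_k$ is immediate: at any continuity point $h$ the finite-valued $f$ is locally constant, and since $\{G_k\}$ is a neighborhood basis of $0$, the coset $h+G_k$ eventually sits inside the constancy neighborhood, forcing $h\in E_k$. The reverse inclusion holds only modulo a $\lambda$-null set and is the main technical obstacle: a point $h\in E_k\cap D_f$ must be a boundary-type discontinuity where $f(h)$ differs from the common value $a$ taken by $f$ on continuity points of $h+G_k$, and I would argue that the union over $k$ of such points forms a $\lambda$-null subset of $D_f$ by relating them to boundaries of the level sets $f^{-1}(a)$. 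Combined, one obtains $\lambda(G\setminus\bigcup_k E_k)=\lambda(D_f)$, i.e.\ $d=\lambda(D_f)$.

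Items (2) and (3) follow from unique ergodicity of $(G,\tau,\lambda)$. By the bi-implication, $\Aper(x)=\{n:\pi(x)+n\mathbf 1\in G\setminus\bigcup_k E_k\}\subseteq\{n:\pi(x)+n\mathbf 1\in G\setminus E_k\}$ for every $k$; since $G\setminus E_k$ is clopen, unique ergodicity yields uniform (in $\pi(x)$) convergence of the Birkhoff averages of $\mathbf 1_{G\setminus E_k}$, so $\overline{\dens}(\Aper(x))\le 1-\lambda(E_k)$ for every $x$. Letting $k\to\infty$ gives $\overline{\dens}(\Aper(x))\le d=\lambda(D_f)$, which is (2). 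For (3), any invariant measure $\mu$ on $X$ projects under $\pi$ to the unique invariant measure $\lambda$ of the odometer, so $\mu$-a.e.\ $x$ has $\pi(x)$ Birkhoff-generic for $\mathbf 1_{G\setminus\bigcup_k E_k}$; the Birkhoff theorem then gives $\dens(\Aper(x))=\lambda(G\setminus\bigcup_k E_k)=d$. Besides the null-set identification in (1), the other delicate point is the backward direction of the bi-implication, which requires carefully using minimality of $X_f$ to exclude $x$-values different from the distinguished $a\in F$ at the critical positions.
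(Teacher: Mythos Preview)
The paper does not supply its own proof of Theorem~\ref{irreg}: it is stated in the preliminaries section, with all proofs deferred to \cite{survey}. So there is nothing in-paper to compare your argument against. Your semicocycle approach via the clopen sets $E_k$ is essentially the standard one, and parts (2) and (3) are handled correctly once the bi-implication is in place. You have, however, correctly located two genuine gaps.

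\textbf{The backward implication.} Your hint ``use minimality of $X_f$'' points in the right direction but the argument is not immediate. A clean route is to first establish the equivalence for the generating Toeplitz sequence $x_0$ (with $\pi(x_0)=\mathbf 0$): if $n\mathbf 1\in E_k$ with witness $a$ but $x_0(n+m_0p_k)=b\neq a$ for some $m_0$, use that $x_0$ is Toeplitz to find $l\ge k$ with $n+m_0p_k\in\Per_{p_l}(x_0)$; the forward direction (applied with period $p_l$) then forces $b\in F(h')$ throughout the sub-coset $\pi_l^{-1}((n+m_0p_k)\bmod p_l)$, and evaluating at any continuity point of that open coset (one exists since $D_f$ has empty interior) yields $a=b$, a contradiction. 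For a general $x\in X$ one then proves $\Per_{p_k}(x)=\Per_{p_k}(x_0)-\pi_k(\pi(x))$ by approximating $x$ by shifts of $x_0$ \emph{and} (here is where minimality enters) $x_0$ by shifts of $x$.

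\textbf{The null-set issue in (1).} This worry is an artifact of working with an arbitrary $\Lambda$-valued semicocycle. If you use the \emph{canonical} one built from $x_0$, namely $f(j\mathbf 1)=x_0(j)$ extended by continuity off $D_f$, then in fact $G\setminus D_f=\bigcup_kE_k$ holds on the nose: continuity of $f$ at $h$ means $x_0(j_i)$ is eventually constant for every $j_i\mathbf 1\to h$, which (since $\Lambda$ is finite and the $G_k$ form a neighbourhood base of $\mathbf 0$) is exactly the statement $\pi_k(h)\in\Per_{p_k}(x_0)\bmod p_k$ for some $k$. With this choice the identity $d=\lambda(D_f)$ is immediate and no boundary/null-set argument is needed.
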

In particular, regularity of \xt\ is equivalent to $d=0$, and to $\dens(\Aper(x))=0$ at every point.

\medskip
Let \xt\ be an irregular Toeplitz subshift. Consider the set of such points $x\in X$ that $\dens(\Aper(x))=d$ (by (3) above, this set has full \im).
Since $\Aper(x)$ is constant throughout every fiber of $\pi$, our set equals $\pi^{-1}(E)$ for some $E\subset G$. For $x\in \pi^{-1}(E)$ we enumerate $\Aper(x)=\{n_i\}_{i\in\Z}$ assuming that the sequence $(n_i)$ is increasing and letting $n_0$ be the smallest
nonnegative element of the \sq. We let $y_x = (x(n_i))_{i\in\Z}\in\Lambda^\Z$ and call it the \emph{aperiodic readout} of $x$. For $g\in E$ we let $Y_g = \{y_x:x\in\pi^{-1}(g)\}$.

\begin{defn}
We say that the Toeplitz subshift \xt\ satisfies the condition SAR (\emph{same aperiodic readouts})
if $Y_g$ is the same for every $g\in E$. We then denote the common space $Y_g$ by $Y$.
\end{defn}
It is easy to see that in this case $Y$ is closed and shift invariant. The following theorem
plays the crucial technical role in most of our examples (for proofs see \cite{survey}):

\begin{thm}\label{tw210}
Let \xt\ be an irregular Toeplitz subshift satisfying the condition SAR. Let $T_f$ be the
skew product acting on $G\times Y$ given by
$$
T_f(g,y)=(\tau(g), S^g(y)),
$$
where $S^g$ equals the shift or the identity, depending on whether $g\in D_{\!f}$ or not, respectively. Then
\begin{enumerate}
	\item There is a bijection between \im s of \xt\ and \im s of $(G\times Y, T_f)$;
	\item Every \im\ on \xt\ is isomorphic to its corresponding \im\ on the skew product;
	\item Every \im\ on the skew product has marginals $\lambda$ on $G$ and some shift-\im\ $\nu$ on $Y$;
	\item Every shift-\im\ $\nu$ on $Y$ appears as the marginal for at least one \im\ on the skew product
	(for example for $\lambda\times\nu$);
	\item The entropy of the skew product with respect to an \im\ equals $d$ times the entropy of the
	corresponding marginal on $Y$.
	\item The \tl\ entropy of the skew product (which equals the topological entropy of \xt) equals $d$
	 times the \tl\ entropy of the shift on $Y$.
\end{enumerate}
\end{thm}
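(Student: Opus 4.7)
The proof plan centers on the map $\Phi:\pi^{-1}(E)\to G\times Y$ defined by $\Phi(x)=(\pi(x),y_x)$, where $E\subset G$ is the $\tau$-invariant full-measure set on which the aperiodic density equals $d$. The first thing to verify is the intertwining $\Phi\circ T=T_f\circ\Phi$. Since shifting $x$ by $T$ translates every element of $\Aper(x)$ by $-1$, and since $n_0$ is by convention the smallest nonnegative element of $\Aper$, one checks that $y_{Tx}=Sy_x$ exactly when $0\in\Aper(x)$ (equivalently, $\pi(x)\in D_f$) and $y_{Tx}=y_x$ otherwise. This reproduces the action of $T_f$ on the nose.

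Next I would show that $\Phi$ is a Borel bijection of $\pi^{-1}(E)$ onto $E\times Y$. Injectivity holds because $g=\pi(x)$ determines $x$ on the periodic part through the semicocycle, while $y_x$ encodes the aperiodic values. Surjectivity onto $E\times Y$ is exactly the SAR condition $Y_g=Y$ for all $g\in E$. By the standard fact that a Borel bijection between standard Borel spaces has Borel inverse, $\Phi$ is a measurable isomorphism. By unique ergodicity of the odometer, every $T$-invariant $\mu$ on $X$ satisfies $\pi_*\mu=\lambda$, so $\mu(\pi^{-1}(E))=1$; symmetrically, every $T_f$-invariant $\tilde\mu$ on $G\times Y$ projects to $\lambda$ on $G$ and therefore charges $E\times Y$ fully. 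The pushforward $\mu\mapsto\Phi_*\mu$ is thus the bijection asserted in (1), and (2) is automatic.

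For (3), the marginal on $G$ is $\lambda$ by unique ergodicity. The shift-invariant $\nu$ on $Y$ attached to $\tilde\mu$ is obtained by Kakutani induction on the base $D_f\times Y$: because $S^g$ is the identity off $D_f$, the first-return map of $T_f$ to $D_f\times Y$ equals $R\times S$, where $R$ is the first-return map of $\tau$ to $D_f$, uniquely ergodic with invariant probability $\lambda|_{D_f}/d$ (Kac). Hence the normalized restriction $\tilde\mu|_{D_f\times Y}/d$ is $(R\times S)$-invariant and its $Y$-marginal $\nu$ is $S$-invariant. Statement (4) reduces to a direct computation: for shift-invariant $\nu$, the product $\lambda\times\nu$ is $T_f$-invariant because $\int_G\nu((S^g)^{-1}B)\,d\lambda(g)=\nu(B)$, using that $S^g$ is either $S$ or the identity and that $\nu$ is $S$-invariant.

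For the entropy statements, Abramov's formula applied to the Kakutani induction gives $h(T_f,\tilde\mu)=d\cdot h(R\times S,\tilde\mu|_{D_f\times Y}/d)$. Since $R$ is induced from the zero-entropy action $\tau$ on $G$, it has zero entropy, so in the product $R\times S$ the joint entropy coincides with the entropy of the $Y$-marginal, yielding (5): $h(T_f,\tilde\mu)=d\cdot h(S,\nu)$. The topological version (6) follows by combining the variational principle on both sides with the measure-theoretic (5), or equivalently by carrying out the same induction argument topologically on a finite-alphabet realization afforded by Theorem \ref{cuatro}. The main technical obstacle, in my view, is the rigorous verification that $\Phi$ is a Borel bijection onto $E\times Y$: one must confirm that every pair $(g,y)\in E\times Y$ comes from a unique $x$ lying in the minimal subshift $X=X_f$ (not merely in the larger multifunction extension $X_F$). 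This is exactly where SAR does the essential work; without it, the fibers $Y_g$ would vary with $g$ and the prospective preimage of $(g,y)$ could fail to lie in $X$.
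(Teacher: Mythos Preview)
The paper does not supply a proof of this theorem; it is quoted in the preliminaries with a pointer to \cite{survey} for the arguments. Your outline is the natural approach and, as far as one can reconstruct from \cite{survey}, essentially the intended one: establish the equivariant Borel bijection $\Phi(x)=(\pi(x),y_x)$ between $\pi^{-1}(E)$ and $E\times Y$, transfer invariant measures through it for (1)--(2), and handle entropy via Kakutani induction on $D_{\!f}\times Y$ combined with Abramov's formula.

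Two small clarifications. In (3) the ``marginal on $Y$'' should be read as the $Y$-projection of the induced measure $\tilde\mu|_{D_{\!f}\times Y}/d$, which is exactly your $\nu$; the naive second-coordinate pushforward of $\tilde\mu$ need not be $S$-invariant when the return time to $D_{\!f}$ is nonconstant, so your Kakutani route is not merely convenient but necessary. In (6), since $T_f$ is discontinuous along $\partial D_{\!f}\times Y$, one cannot invoke the variational principle directly on the skew product; your alternative---applying the variational principle on the genuine topological systems $(X,T)$ and $(Y,S)$ and chaining (1), (2), (5), (4) to get $h_{\mathsf{top}}(X,T)=\sup_\mu h(T,\mu)=\sup_{\tilde\mu}h(T_f,\tilde\mu)=d\sup_\nu h(S,\nu)=d\,h_{\mathsf{top}}(Y,S)$---is the correct workaround.
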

We will refer the the above facts several times.

\section{Preliminaries on the M\"obius function and Sarnak's conjecture}
The \emph{M\"obius function} denoted by $\mmu$ is defined on positive integers as follows

$$
\mmu(n)=\begin{cases}
\phantom{-}1&\text{if $n=1$,}\\
\phantom{-}0& \text{if $n$ has a repeated prime factor,}\\
\phantom{-}(-1)^r&\text{if $n$ is a product of $r$ distinct primes.}
\end{cases}
$$
This  function has been introduced by A. F. M\"obius in \cite{mobius} to obtain inversion formulas for arithmetic functions \cite{CDM}. It plays an important role in number theory. The reader is referred to the rich literature in that area for more information, let us quote  only two fundamental monographs: \cite{P}, \cite{Wa}.

\begin{defn}
Let $\xi(n)$ and $\eta(n)$ be two bounded complex-valued \sq s over $\na$. We say that these
\sq s are \emph{uncorrelated} if
$$
\lim_n\frac 1n\sum_{i=1}^n\xi(i)\overline\eta(i) = 0.
$$
\end{defn}
One of the intriguing properties of the M\"obius function is its apparent randomness in
the distribution of its values. It is well known that the forward density of square-free numbers
(i.e., of the set $\{n:|\mmu(n)|=1\}$) exists and equals $\frac 6{\pi^2}$ (see \cite{N}, Thm 21.8 and the following Corollary). On the other hand, the densities of positive and negative values are equal implying that $\mmu$ is uncorrelated to the constant \sq\ (see \cite{P}, Thm 5.1).

Moreover, it is uncorrelated to any periodic function (an elementary proof can be found in \cite{GL}), a fact which is connected with the laws of the distribution of primes along arithmetic progressions (see \cite{S}).
Let us remark that a more detailed analysis of this phenomenon, more precisely, of the behavior of the partial sums of the M\"obius function, is an important area of study, connected to many fundamental number theoretical problems, see e.g. classical works  \cite{Wa}, \cite{Sch} and more recent  papers \cite{RaRu},  \cite{HaSu}.

Sarnak \cite{S} conjectures that $\mmu$ is uncorrelated to any \sq\ obtained by reading any continuous function along any orbit in any \tl\ \ds\ with \tl\ entropy zero, as follows:

\begin{conj}
Let \xt\ be a \tl\ \ds\ with \tl\ entropy zero. Let $f:X\to\mathbb C$ be a continuous function. Fix
an $x\in X$ and let $\xi(n)=f(T^n x)$ (for $n\ge 1$). Then $\xi$ and $\mmu$ are uncorrelated.
\end{conj}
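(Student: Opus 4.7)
The plan is to address Sarnak's conjecture in its full stated generality, which is a celebrated open problem: no proof is currently known that applies to every zero-entropy \tl\ \ds, and the present paper does not claim one either. A responsible proof plan therefore amounts to outlining a strategy that uses the standard general reductions, isolates the known cases, and flags where the argument must stall at the frontier of current knowledge.

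First I would perform the routine preparatory reductions. Since the orbit closure $\overline{\{T^n x:n\ge 1\}}$ is a subsystem of \xt\ whose \tl\ entropy is bounded by that of \xt, I may assume the system is minimal by restricting to this closure. I may also restrict attention to a dense countable family of continuous test functions $f$ (and ultimately to finite-valued $f$ when $X$ is zero-dimensional), using uniform continuity of $f$ and boundedness of $\mmu$. Next, I would invoke the Bourgain--Sarnak--Ziegler criterion: orthogonality of $\xi(n)=f(T^n x)$ with $\mmu$ would follow from estimates
$$
\lim_{N\to\infty}\frac 1N\sum_{n=1}^N \xi(pn)\overline{\xi(qn)} = 0
$$
for sufficiently many pairs of distinct primes $p,q$. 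This converts the problem into a Chowla-type disjointness assertion about the action of prime powers of $T$ along the orbit of $x$.

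Second, I would proceed by a case analysis structured by the spectral and structural complexity of \xt. For \tzs s and other systems isomorphic to an equicontinuous factor, orthogonality reduces to Davenport-type estimates of $\mmu$ against periodic \sq s (the route taken in the earlier sections of this paper). For nilsystems and their \tl\ extensions, the Green--Tao orthogonality of $\mmu$ with nilsequences applies. For horocycle flows, skew products over translations, and various analytic flows on surfaces, individual theorems are available. The plan would then be to invoke a structure theorem for \tl\ zero-entropy systems, analogous to the Furstenberg--Host--Kra tower in the measurable category, that decomposes any such system into these tractable building blocks in a way that preserves orthogonality with $\mmu$ under extensions.

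The main obstacle, and the reason the above can be only a sketch rather than a proof, is that no such structure theorem is known for arbitrary \tl\ zero-entropy systems. Zero \tl\ entropy is a very weak constraint: it forbids exponential growth of the number of $n$-orbits, but permits \tl ly weakly mixing systems with no nontrivial equicontinuous or nilpotent factor and with subexponential yet otherwise wild combinatorial complexity. In such generality, the Bourgain--Sarnak--Ziegler criterion reduces the problem to a genuine input about the autocorrelations of $\mmu$ itself (Chowla's conjecture), whose full resolution is at least as difficult as Sarnak's. Accordingly, the plan terminates exactly where the present state of analytic number theory does, which is consistent with the scope of this paper: the authors secure the conjecture for a selection of \tl ly tractable subclasses (regular Toeplitz models, certain irregular ones satisfying SAR, isomorphic extensions of equicontinuous systems, and one \tl ly mixing example handled ad hoc) rather than attacking the general statement.
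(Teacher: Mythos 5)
The statement you were given is Sarnak's conjecture itself, which the paper states as an open \emph{conjecture} and never claims to prove --- it only establishes special cases (isomorphic extensions of equicontinuous systems, regular and some irregular Toeplitz systems, one topologically mixing model) --- so you were right not to manufacture a proof, and your outline of the standard reductions and of where the general case stalls is an accurate description of the state of the art. The only quibble is your last paragraph's phrasing: the Bourgain--Sarnak--Ziegler criterion reduces orthogonality to a correlation estimate about the \emph{dynamical} sequence $\xi$ along prime dilates, not about $\mmu$; the known relation to Chowla's conjecture is the separate implication (due to Sarnak) that Chowla implies the conjecture for all zero-entropy systems.
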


The conjecture is known to hold for relatively few types of \ds s, in particular for odometers,
irrational rotations, nilsequences \cite{GT}, horocycle flows \cite{BSZ}. See also \cite{B1}, \cite{B}, \cite{ALR}, \cite{G}, \cite{KL}, \cite{LS}, \cite{MR},  for    other results.  We remark, that validity of Sarnak's conjecture for odometers follows directly from the fact that the M\"obius function is uncorrelated to any periodic \sq. The validity for irrational rotations can be proved by a criterion from \cite{BSZ} or by completely different property of the M\"obius function, discovered by Davenport \cite{Da}. 
It is a folklore fact that the conjecture holds for the classical Sturmian subshift; the proof uses heavily the fact that this subshift has complexity\footnote{the number of words of length $n$ in the subshift} $c(n)=n+1$ (\cite{MH}).
Recently, in \cite{Lem} Sarnak's conjecture has been proved for some Morse subshifts. Every continuous
function on such a system decomposes as the sum of a function depending on the Toeplitz factor and an orthogonal one. Thus the method relies on two ingredients: for continuous functions orthogonal to the Toeplitz factor some specific spectral and disjointness methods are used. To handle the other ingredient the authors simply prove the conjecture for regular Toeplitz subshifts. In this note we extend the latter proof (in fact, we only notice that essentially the same proof applies) to a class slightly larger than regular Toeplitz systems, that of isomorphic extensions of compact monothetic group rotations. This includes some not necessarily regular Toeplitz systems and generalized Sturmian subshifts.

\section{Sarnak's conjecture for isomorphic extensions}

The following fact has been observed jointly by the first author and M. Lema\'nczyk.
\begin{thm}\label{4.1}
Let \xt\ and \ys\ be strictly ergodic \tl\ \ds s, with \im s $\mu$ and $\nu$, respectively,
and let $\pi:X\to Y$ be a \tl\ factor map which is, at the same time, an isomorphism.
If Sarnak's conjecture holds for \ys\ then it also holds for \xt.
\end{thm}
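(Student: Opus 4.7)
The plan is to exploit the fact that, although the algebra $\pi^*C(Y)=\{g\circ\pi:g\in C(Y)\}$ is a proper subalgebra of $C(X)$, the isomorphism hypothesis on $\pi$ forces it to be dense in $L^1(X,\mu)$. Combined with uniform convergence of ergodic averages on the uniquely ergodic system \xt, this will let me approximate an arbitrary continuous test function on $X$ by one lifted from $Y$ with an $L^1$ error that is negligible in the M\"obius sum; the main term is then handled by Sarnak's conjecture on \ys.

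Concretely, I would fix $\varepsilon>0$ and a continuous $f:X\to\mathbb{C}$. Since $\pi$ is a measure-theoretic isomorphism, $f\circ\pi^{-1}$ is well defined $\nu$-a.e.\ and belongs to $L^\infty(Y,\nu)$. By Lusin's theorem (density of $C(Y)$ in $L^1(Y,\nu)$) I would pick $g\in C(Y,\mathbb{C})$ with
$$\int_Y |f\circ\pi^{-1}-g|\,d\nu<\varepsilon;$$
a change of variables, using $\pi_*\mu=\nu$, rewrites this as $\int_X |f-g\circ\pi|\,d\mu<\varepsilon$. Now $|f-g\circ\pi|$ is continuous on $X$, so by the Krylov--Bogolyubov--Oxtoby characterization of unique ergodicity its ergodic averages converge uniformly in $x$:
$$\frac1n\sum_{i=1}^n |f-g\circ\pi|(T^ix)\longrightarrow\int_X |f-g\circ\pi|\,d\mu<\varepsilon.$$
Combined with $|\mmu(i)|\le1$ and the intertwining $\pi\circ T=S\circ\pi$, this yields
$$\limsup_n\left|\frac1n\sum_{i=1}^n\bigl(f(T^ix)-g(S^i\pi(x))\bigr)\overline{\mmu(i)}\right|\le\varepsilon.$$

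Finally, applying Sarnak's conjecture for \ys\ to the continuous function $g$ at the point $\pi(x)\in Y$ gives
$$\lim_n\frac1n\sum_{i=1}^n g(S^i\pi(x))\,\overline{\mmu(i)}=0,$$
so by the triangle inequality $\limsup_n\bigl|\frac1n\sum_{i=1}^n f(T^ix)\,\overline{\mmu(i)}\bigr|\le\varepsilon$. Since $\varepsilon>0$ was arbitrary, Sarnak's conjecture holds for \xt.

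The only delicate point is the $L^1$-density of $\pi^*C(Y)$ in the space of bounded measurable functions on $X$, which would fail if $\pi$ were only a topological factor: then $\pi^*C(Y)$ would generate only the proper sub-$\sigma$-algebra $\pi^{-1}(\mathcal{B}_Y)\subsetneq\mathcal{B}_X$. The hypothesis that $\pi$ is a measure isomorphism is exactly what makes $\pi^{-1}$ measurable, so that $f\circ\pi^{-1}$ can be approximated in $L^1(\nu)$ by a continuous $g$; afterwards, unique ergodicity of \xt\ upgrades the $L^1$ estimate to a uniform pointwise bound, making the rest of the argument essentially automatic.
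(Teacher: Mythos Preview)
Your proof is correct and follows essentially the same route as the paper: transfer $f$ to $f\circ\pi^{-1}$ via the measure isomorphism, approximate it in $L^1(\nu)$ by a continuous $g\in C(Y)$, use unique ergodicity of \xt\ to control the error term uniformly in $x$, and apply Sarnak's conjecture on \ys\ to the main term. The only cosmetic difference is that the paper approximates first in $L^2(\nu)$ (and then passes to $L^1$ by Cauchy--Schwarz), whereas you go to $L^1$ directly.
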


\begin{proof}
Let $f:X\to\mathbb C$ be continuous. Then $f\in L^2(\mu)$ and $f'=f\circ\pi^{-1}\in L^2(\nu)$
($f'$ is defined $\nu$-almost everywhere on $Y$). Since $C(Y)$ is dense in $L^2(\nu)$, there
exists a continuous $g':Y\to\mathbb C$ such that $\int |f'-g'|^2\,d\nu<\epsilon^2$, hence
$\int |f'-g'|\,d\nu<\epsilon$. The function $g=g'\circ\pi$ is continuous on $X$ and $\int |f-g|\,d\mu<\epsilon$. Because in strictly ergodic systems every point is generic\footnote{fulfills the ergodic theorem for every continuous function}, we have $\lim_n\frac1n \sum_{i=1}^n|f(T^ix)-g(T^ix)|<\epsilon$, for any $x\in X$. Now, we write
$$
\left|\frac1n \sum_{i=1}^nf(T^ix)\mmu(i)\right| \le \left|\frac1n \sum_{i=1}^ng(T^ix)\mmu(i)\right|+\frac1n \sum_{i=1}^n|f(T^ix)-g(T^ix)||\mmu(i)|.
$$
The first average on the right hand side equals $\left|\frac1n \sum_{i=1}^ng'(S^iy)\mmu(i)\right|$, where $y=\pi(x)$, and is small for large $n$, because
Sarnak's conjecture holds on \ys. The last average does not exceed, for large $n$, the arbitrarily
small $\epsilon$. Thus the left hand side tends zero with growing $n$.
\end{proof}

In \cite{Lem} the reader will find a slightly different statement, in which \ys\ is assumed \emph{coalescent}\footnote{every endomorphism from the system to itself is an isomorphism}
and the assumption that the isomorphism between \xt\ and \ys\ is realized by the same topological
factor map $\pi$ is dropped (it is then fulfilled automatically). Recall that odometers and other
ergodic group rotations are coalescent.

We now draw conclusions concerning particular types of \tl\ \ds s. It seems that items (1b) and (2) below are new.
Notice that for the classical Sturmian subshift we have obtained a new proof not relying on the exact complexity.
\begin{cor}
Sarnak's conjecture holds for:
\begin{enumerate}
	\item regular semicocycle extensions of any minimal equicontinuous systems\footnote{Notice that semicocycle	extensions can be as well defined on any strictly ergodic system, not necessarily on an odometer. Regularity means that the set of discontinuities of the semicocycle has measure zero.}, in particular
	\subitem {\rm (1a)} regular Toeplitz systems (see also \cite{Lem});
	\subitem {\rm (1b)} generalized Sturmian subshifts\footnote{A classical Sturmian subshift is obtained as the semicocycle extension of the irrational rotation by an angle $\alpha$, where the semicocycle is
	precisely the characteristic function of $[0,\alpha]$. In generalized Sturmian subshifts the
	semicocycle is admitted characteristic function of any nondegenerate subinterval or even a finite
	union of intervals.};
	\item some irregular Toeplitz subshifts as in the Example \ref{exe} below.
\end{enumerate}
\end{cor}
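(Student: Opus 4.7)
The plan is to reduce each item to Theorem \ref{4.1} by exhibiting, in each case, a topological factor map from the given system onto a strictly ergodic system for which Sarnak's conjecture is already known, and by verifying that this factor map is simultaneously a measure-theoretic isomorphism of the unique invariant measures. Once such data is in hand, Theorem \ref{4.1} closes the argument with no further work.

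For item (1) — a regular semicocycle extension $(X,T)$ of a minimal equicontinuous system $(G,\tau)$ — the semicocycle construction itself yields the topological factor map $\pi\colon X\to G$. The base $(G,\tau)$ is strictly ergodic (minimal with Haar measure $\lambda$ as the unique invariant measure), and $(X,T)$ is strictly ergodic as well: it is minimal by definition of the semicocycle extension, and regularity ensures that $\pi$ is 1-1 on a $\lambda$-full set, from which unique ergodicity lifts. By the natural analog of Theorem \ref{cinco}(1) in the equicontinuous setting, the very condition $\lambda(D_{\!f})=0$ that defines regularity is exactly what makes $\pi$ a measure-theoretic isomorphism. It remains to invoke that Sarnak's conjecture holds at the base in the cases of interest: for (1a) the base is an odometer, so the conjecture reduces to orthogonality of $\mmu$ with every periodic sequence; for (1b) the base is an irrational circle rotation and the conjecture follows from Davenport's theorem \cite{Da}. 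Theorem \ref{4.1} then transfers the conjecture to $(X,T)$.

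For item (2), we defer to the construction in Example \ref{exe} below. That example will exhibit an irregular Toeplitz subshift together with a topological factor map onto an equicontinuous system that is, despite irregularity, still a measure-theoretic isomorphism; Theorem \ref{4.1} then applies verbatim. The main obstacle lies entirely in producing such an example: Theorem \ref{cinco} shows that, for an irregular Toeplitz subshift, the canonical almost 1-1 factor map onto the maximal equicontinuous factor \emph{fails} to be a measure-theoretic isomorphism, so the MEF cannot be used. In particular, any irregular Toeplitz subshift isomorphic to an equicontinuous system is forced to have zero topological entropy, which via Theorem \ref{tw210}(6) constrains the aperiodic readout $Y$ to admit only zero-entropy shift-invariant measures. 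The delicate point of Example \ref{exe} is to negotiate these constraints explicitly; once it is in place, nothing further is needed for the Corollary.
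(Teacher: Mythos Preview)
Your reduction to Theorem \ref{4.1} is the right skeleton, and for items (1a), (1b), and (2) the bottom line is correct. But two points need attention.

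\medskip
\textbf{Gap in the general item (1).} The corollary claims Sarnak's conjecture for regular semicocycle extensions of \emph{any} minimal equicontinuous system, not just odometers and circle rotations. You set up the factor map $\pi\colon X\to G$ and correctly observe that regularity makes $\pi$ a measure-theoretic isomorphism, but then you only verify Sarnak at the base ``in the cases of interest'' (1a) and (1b). The general statement needs Sarnak's conjecture for an arbitrary minimal equicontinuous system $(G,\tau)$, and this is precisely what the paper's proof supplies: by the Halmos--von Neumann theorem, $L^2(\lambda)$ is spanned by continuous eigenfunctions, so (by the same $L^1$-approximation argument as in Theorem \ref{4.1}) it suffices to check the conjecture for continuous eigenfunctions, each of which factors through either an odometer (rational eigenvalue) or an irrational rotation. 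Without this step your argument covers only the subitems, not item (1) itself.

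\medskip
\textbf{Misreading of Theorem \ref{cinco} in item (2).} You assert that Theorem \ref{cinco} shows the canonical factor map $\pi$ onto the maximal equicontinuous factor \emph{fails} to be a measure-theoretic isomorphism for irregular Toeplitz subshifts, so ``the MEF cannot be used''. This is false, and in fact it is the central point the paper is making. Theorem \ref{cinco} says that irregularity is equivalent to $\{g:\#\pi^{-1}(g)=1\}$ having $\lambda$-measure less than one; but ``$\pi^{-1}(g)$ is a singleton $\lambda$-a.e.'' is strictly stronger than ``$\pi$ is a measure-theoretic isomorphism''. The latter only requires the \emph{conditional measures} on the fibers to be Dirac masses, which can happen even when the fibers are large. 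Example \ref{exe} is built to witness exactly this: the fibers are parametrized by $Y$, the unique invariant measure has marginal $\nu=\delta_{(\dots000\dots)}$ on $Y$, and hence $\pi$ (the MEF map itself) \emph{is} the isomorphism. The paper explicitly flags this as a correction to an erroneous claim in \cite{survey}. Your deferral to Example \ref{exe} is fine as a proof step, but the surrounding commentary inverts the point of the example.
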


\begin{proof}
To be absolutely clear, let us argue why does the conjecture hold for minimal equicontinuous systems.
By the Halmos--von Neumann Theorem, every such system is uniquely ergodic and the space $L^2(\mu)$ is spanned
by (at most countably many) continuous eigenfunctions. Thus every continuous function can be approximated
in $L^2(\mu)$ (hence also in $L^1(\mu)$) by a finite sum of continuous eigenfunctions. Now, by an argument
as in the preceding proof, it suffices to verify the conjecture for continuous eigenfunctions. But every such function arises as a continuous function defined on either an odometer (if the eigenvalue is rational) or an irrational rotation (otherwise).
\end{proof}

\section{Examples of models of odometers}

\begin{exam}\label{exe} There exist \emph{irregular} Toeplitz subshifts which are strictly ergodic and isomorphic (via the same \tl\ factor map) to their maximal equicontinuous factor odometers (for these
Sarnak's conjecture holds).
\end{exam}

\noindent
\emph{Sketch of the construction.} An explicit example of such a system is generated by the (unilateral) Toeplitz \sq\ described below.
\smallskip

Pick a block $B_1=000...01000...0$ of some length $r_1$ consisting of zeros but one symbol 1 (somewhere).
We place this block $p_1$-periodically (for some $=p_1>r_1$, we also let $q_1=p_1$) (see Figure 1).

\begin{figure}[ht]\label{fig1}
$$
0100\!*\!*\!*0100\!*\!*\!*0100\!*\!*\!*0100\!*\!*\!*0100\!*\!*\!*0100\!*\!*\!*0100\!*\!*\!*0100\!*\!*\!*0100\!*\!*\!*0100...
$$
\caption{\small On this figure $B_1=0100$, $r_1=4$ and $p_1=7$.}
\end{figure}
The unfilled places (the stars) come in blocks of length $Q_1=q_1-r_1$. We pick a block $B_2$ of some length $r_2Q_1$, consisting of zeros but one symbol 1. We write this block into $r_2$ consecutive empty blocks and repeat $p_2$-periodically, where $p_2=q_2p_1$, for some $q_2>r_2$ (see Figure 2).

\begin{figure}[ht]\label{fig2}
$$
0100\mathbf{000}\,0100\mathbf{100}\,0100\mathbf{000}\,0100\!*\!*\!*0100\!*\!*\!*0100\!*\!*\!*0100\mathbf{000}\,0100\mathbf{100}\,0100\mathbf{000}\,0100...
$$
\caption{\small On this figure $B_2=000100000$, $r_2=3$ and $q_2=6$.}
\end{figure}

Now the unfilled positions come in clusters of $q_2-r_2$ blocks of length $q_1-r_1$.
\medskip

We continue in this manner: in step $k+1$ we use a block $B_{k+1}$ consisting of all zeros but one symbol 1,
whose length equals $r_{k+1}$ (a freely chosen number) times $Q_k=(p_1-r_1)(p_2-r_2)\dots(p_k-r_k)$ (the number of unfilled positions in $[0,p_k-1]$ in the so far constructed sequence), we use this block to fill all unfilled places in $[0,r_{k+1}p_k-1]$, then we repeat it with a period $p_{k+1}=q_kp_k$ for some freely chosen $q_k>r_k$. Two more details must be taken care of: the products $\prod_{k=1}^N(1-\frac{r_k}{q_k})$, representing the density of unfilled positions after step $N$, must converge to a number $d>0$. The second requirement is that for each $k$ the symbol 1 appears in the future blocks $B_{k'}$ $(k'>k)$ at positions whose remainders modulo $Q_k$ assume every possible value infinitely many times. With such an arrangement it is not very hard to see that:
\begin{enumerate}
	\item the generated (bilateral) Toeplitz subshift \xt\ is an almost 1-1 extension of the odometer
	\gt\ with scale $H=(p_k)$;
	\item \xt\ is irregular and satisfies the condition SAR with the space $Y$ of aperiodic readouts
	consisting of all $\{0,1\}$-valued \sq s having at most one symbol 1.
\end{enumerate}
Clearly, $Y$ supports only one \im\ $\nu$, the pointmass at the fixpoint $(\dots000\dots)$. Theorem
\ref{tw210} (1)--(3) implies that \xt\ is uniquely ergodic, the unique \im\ $\mu$ is isomorphic to the only measure with marginals $\lambda$ and $\nu$, which is $\lambda\times\nu$. Since $\nu$ is supported by
one point, the factor map $\pi$ provides an isomorphism between $\mu$ and $\lambda$, as required. $\blacksquare$
\medskip

There exist strictly ergodic systems (also subshifts) isomorphic to an odometer \gt, yet whose maximal equicontinuous factor $(G',\tau')$ is a proper factor of \gt\ and Theorem~\ref{4.1} does not apply to such systems. For $G$ which is not simple\footnote{An odometer is \emph{simple} when its scale is $(p^k)$ for a prime number $p$. Simple odometers have no infinite proper factors, other do.}, such examples are easily obtained with $G'$ being an adding machine and the system is an almost 1-1 extension of $(G',\tau')$, while the remaining eigenvalues of $G$ are realized by discontinuous eigenfunctions (see \cite{D-L}). There exist also models for which the maximal equicontinuous factor is trivial (hence all eigenvalues of \gt\ are realized by discontinuous eigenfunctions and the system is \tl ly weakly mixing). Such examples can be produced for all odometers, including the simple ones. Below we give an example with even stronger property of \tl\ mixing.\footnote{It is known (\cite{Leh})
  that every aperiodic ergodic system has a topologically mixing strictly ergodic model. Here we provide a particular example.}

\begin{exam}\label{exe1}
Given an odometer \gt, there exists a strictly ergodic topologically mixing subshift \xt\
isomorphic to \gt.
\end{exam}

\noindent
\emph{Sketch of the construction.} Since we will be dealing with subshifts, $T$ will always denote the shift transformation, regardless of the domain. Let $H=(p_k)$ denote the scale of the odometer. Let $(X_0,T)$ be a regular Toeplitz subshift with maximal equicontinuous factor \gt, and let $x_0\in X_0$ be a
Toeplitz \sq. We will produce a \sq\ of \tl\ conjugacies of $(X_0,T)$, converging almost everywhere
to an isomorphism with the desired subshift \xt.

In step 1, choose some $r_1\in\na$ and find all periodic repetitions in $x_0$ of the central block $C_1=x_0[-r_1,r_1]$. The period of the repetitions is some $p_1\in H$. Choose $q_1$ such that $p_1q_1\in H$ and choose every $q_1$th periodic occurrence of $C_1$ in $x_0$ (avoiding the central one). Let us call these places \emph{1-windows}. Now comes the modification: within each 1-window we shift the contents one position to the left (sending the leftmost symbol to the right end). The modification passes over, in an obvious way, to all elements of $X_0$ and is invertible if $q_1$ is large enough\footnote{$q_1$ must be long enough so that the $p_1$-periodic part of any $x\in X_0$ can be determined by viewing any block of length $p_1q_1-2r_1-1$.}. This is our conjugacy $\Phi_1:X_0\to X_1$ between two Toeplitz subshifts. We
denote by $x_1$ the Toeplitz \sq\ $\Phi_1(x_0)$.

In step 2 we choose some $r_2$ and we denote $C_2=x_1[-r_2,r_2]$. We must take care of two details:
$C_2$ must be long enough to include several 1-windows, moreover, its ends must fall \emph{far} from
the 1-windows, for instance, approximately in the middle between two of them. In the future this will prevent an accumulation of the end-irregularities. We find all periodic occurrences of $C_2$ in $x_1$, and their period $p_2\in H$. Next we choose some large $q_2$ such that $p_2q_2\in H$ and we mark every $q_2$th
copy of $C_2$ in $x_2$ (avoiding the central one) as 2-windows. Like before, we shift the contents of each 2-window one position to the left, sending the leftmost symbol to the right end. This modification spreads naturally to a conjugacy $\Phi_2:X_1\to X_2$ between Toeplitz subshifts. We let $x_2=\Phi_2(x_1)=\Phi_2\Phi_1(x_0)$.
\smallskip

We proceed in this manner infinitely many times, assuring that the densities of the positions affected
by consecutive modifications (i.e., the ratios $\rho_k=\frac{2r_k+1}{p_kq_k}$) are summable, and that
the ends of the $k$-windows fall, for every $k'<k$ approximately in the middle between
the a pair $k'$-windows (see Figure 3).

\begin{figure}[ht]\label{fig3}
\begin{center}
\includegraphics[width=12cm]{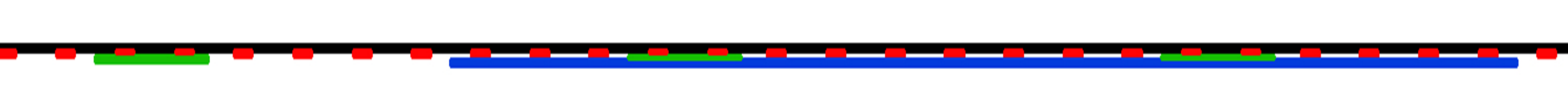}
\caption{\small The colors show the $k$-windows (i.e., areas affected by $\Phi_k$): original -- black, 1-windows -- red, 2-windows -- green, 3-windows -- blue (the next 3-window too far to be shown).
The shift by one position of red within green, and that of green and red within blue are to small
to be seen.}
\end{center}\end{figure}

Let $x_\infty$ denote the sequence obtained as the limit of $x_k$ (which exists because every position in $x_0$ is affected by the modifications at most finitely many times), and we let $X_\infty$ be the shift orbit closure of $x_\infty$ and finally $X$, a minimal subset of $X_\infty$.\footnote{This step allows us to avoid verifying whether $X_\infty$ is minimal.}
Notice that the maps $\Psi_k=\Phi_k\dots\Phi_2\Phi_1$ converge at every point of $X_0$ whose every coordinate is affected at most finitely many times. We let $\Psi$ be the (partially defined) limit map on $X_0$.

The following observations are standard and we skip their proofs:
\begin{enumerate}
	\item $(X_\infty,T)$ is \tl ly mixing (can be checked directly by the definition).
	\item $\Psi$ is defined almost everywhere on $X_0$ and acts into $X_\infty$.
	\item For each $k$, on $X$ we can define the following map $\psi_k$: in each element of $X$
	we can easily identify and reverse the consequences of the modification $\Phi_1$, $\Phi_2,\Phi_3,
	\dots,\Phi_k$ (although we ``invert'' $\Phi_k$ in a seemingly wrong order, these inverses commute).
	{\it Attention}: the maps $\psi_k$ are not precisely inverses of $\Psi_k$; they are defined on
	$X_\infty$,	not on 	$X_k$.
	\item The limit of the above maps $\psi_k$ (call it $\psi$) is defined almost everywhere on $X_\infty$
	for every \im.
	\item The range of $\psi$ is contained in $X_0$ and $\psi$ inverts $\Psi$ wherever the latter is defined.
\end{enumerate}
The last four facts imply that $(X_\infty,T)$ is uniquely ergodic and isomorphic to $(X_0,T)$. Since the minimal subset $X\subset X_\infty$ supports an \im, it supports the unique one, hence \xt\ is isomorphic to $(X_0,T)$ and, additionally, minimal. Thus \xt\ is the desired example. $\blacksquare$
\medskip

Although we have no general proof of Sarnak's conjecture for all models of odometers, interestingly,
it does hold in the above \tl ly mixing example. Again, we will only sketch the argument.

\medskip\noindent
\emph{Sketch of proof of Sarnak's conjecture in Example \ref{exe1}.} Since every continuous function on $X$ can be approximated in $L^1(\mu)$ by a (continuous) linear combination of characteristic functions of cylinders corresponding to finite blocks, it suffices
to verify the conjecture for such characteristic functions. We will do that for blocks of length
1, i.e., for the occurrences of the symbol 1. The argument for longer blocks is identical.

Fix an $\epsilon>0$ and let $k$ be such that the density of places where $x_\infty$ differs from $x_k$
(i.e., $\sum_{k'=k+1}^\infty\rho_{k'}$) is less than $\epsilon$. Since $x_k$ is regular Toeplitz, there are $l$ and $N$ such that the $p_l$-periodic part occupies a fraction at least $1-\epsilon$ in every block of length $N$ appearing in any element of $X_k$. The problem we are facing is that the (slightly perturbed) $p_l$-periodic part in $x\in X$ does not come from a continuous function on $X$ any more, and we have no guarantee that \emph{all} points realize the ergodic theorem for it. We need more subtle observations.

We can assume that $N$ is large enough so that
$$
\left|\sum_{i=1}^n\xi(i)\mmu(i)\right|<n\epsilon
$$
for every $\{0,1\}$-valued $p_l$-periodic \sq\ $\xi$ and every $n\ge N$. Let $x\in X$
and consider the initial block $x[1,n]$ for $n\ge \frac N\epsilon$ (this is an
arbitrary block $B$ of length $n$ appearing in $x_\infty$). We will argue that there
are (at most) three blocks $B_1,B_2,B_3$ appearing in $X_k$, with lengths summing to $n$,
such that $x[1,n]$ nearly equals the concatenation $B_1B_2B_3$ in the sense that
the fraction of disagreements is at most $2\epsilon$. Once this is proved, we can
write $B_1=x'[1,m], B_2=x''[m+1,m'], B_3=x'''[m'+1,n]$ for some $x',x'',x'''\in X_k$
and $m,m'\in[1,n]$, $m\le m'$, and then have the following estimation
\begin{gather*}
\left|\sum_{i=1}^n x(i)\mmu(i)\right| \overset{2n\epsilon}\approx
\left|\sum_{i=1}^m x'(i)\mmu(i)+\sum_{i=m+1}^{m'} x''(i)\mmu(i)+\sum_{i=m'+1}^{n} x'''(i)\mmu(i)\right|=\\
\left|\sum_{i=1}^m x'(i)\mmu(i)+\sum_{i=1}^{m'} x''(i)\mmu(i)-
\sum_{i=1}^{m} x''(i)\mmu(i)+\sum_{i=1}^{n} x'''(i)\mmu(i)-
\sum_{i=1}^{m'} x'''(i)\mmu(i)\right|<10n\epsilon,
\end{gather*}
because
\begin{itemize}
	\item the sums shorter than $N$ contribute at most $sN<sn\epsilon$ ($s\le 4$ is the number of such sums),
	\item in each sum not shorter than $N$
	\begin{itemize}
		\item$ x',x''$ or $x'''$ can be replaced by a $p_l$-periodic \sq\ and this will change the sum
		by less than $\epsilon$ times the summing length,
		\item once the above replacement is done, the absolute value of the sum does not exceed $\epsilon$
		times the summing length,
	\end{itemize}
	\item the sum of the summing lengths equals $(5-s)n$.
\end{itemize}

Clearly, this estimation ends the proof.

So, it remains to break a block $B$ of length $n$ appearing in $x_\infty$ into at most three subblocks,
as desired. We can think of $B$ as of a block $B_0$ appearing somewhere in $x_k$, affected by some finitely many modifications $\Phi_{k+1},\dots,\Phi_K$. If $B$ is entirely contained in a $K$-window without its endpoints, then $\Phi_K$ shifts the entire contents one position to the left, so that the result is the same as if we started from the block $B_0'$ lying in $x_k$ one position to the left with respect to $B_0$ and pretended that $\Phi_K$ did not affect it.
In this manner, we can ignore all such cases and move on to the largest index (and denote this one by $K$) for which only part of $B$ is affected by $\Phi_K$, that is, $B$ contains an endpoint of a $K$-window.
Now there are three possibilities:
\begin{enumerate}
	\item $B$ intersects two or more $K$-windows,
	\item $B$ contains one or both endpoints of just one $K$-window.
\end{enumerate}
In case (1) the fraction of modifications introduced by $\Phi_K$ in $B$ is at most $2\rho_K$
(and $\rho_{k'}$ for earlier modifications with $k'=k+1,\dots, K-1$), so $B$ differs from $B_0$
on a fraction of at most $2\epsilon$ places and there is no need to partition it (we put $B_1=B_0$, there is no $B_2$ or $B_3$). In case (2) we cut $B$ at the endpoints of the $K$-window. This produces two or three subblocks $B_1',B_2',B_3'$. Notice that $\Phi_K$ affects only one of these subblocks and only by shifting it ``in one piece''. So, we only need to see how much each subblock is affected by the earlier modifications. Recall that for each $k'<K$ the endpoints of the $k'$-windows fall approximately $\frac{p_{k'}q_{k'}}2$ places away from the endpoints of the $K$-window. This implies that if a modification $\Phi_{k'}$ does affect a subblock $B_i'$ ($i=1,2$ or $3$), then the fraction of
the modifications in this subblock is at most $2\rho_{k'}$. So each $B_i'$ differs from a subblock of
$x_k$ on a fraction of at most $2\epsilon$ places. This completes the argument. \qed
\medskip


\section{Toeplitz \sq s which fail Sarnak's conjecture}

We will say that \sq s $\xi$ and $\eta$ are \emph{weakly} (resp. \emph{strongly}) correlated if the upper (resp. lower) limit of $\frac 1n\sum_{i=1}^n\xi(i)\overline\eta(i)$ is positive.

\subsection{Weak failure}

\begin{exam}\label{exem}
There exists an irregular $\{-1,1\}$-valued Toeplitz subshift $X$ such that some $x\in X$ are strongly correlated with $\mmu$. The set of such points $x$ is dense in $X$. Moreover, weak correlation holds on a residual subset of $X$.
\end{exam}

\noindent
The example is very simple, once the general construction of Toeplitz systems is understood: Consider
an irregular Toeplitz subshift satisfying the condition SAR and such that $Y$, (the space of aperiodic readouts) is the full shift on two symbols $\{-1,1\}$ (we skip the detailed construction of such a subshift;
it is done by a ``standard method'' e.g., the Oxtoby technique, see \cite{survey}). Let $\pi:X\to G$ denote the maximal equicontinuous factor map onto the underlying adding machine and let $\lambda$ be the Haar measure on $G$. Then, by Theorem \ref{irreg} (and the explanations following that theorem),
there is a set $G'\subset G$ with $\lambda(G')=1$ which satisfy the following two conditions:

\begin{enumerate}
	\item all points $x$ in the fiber $\pi^{-1}(g)$ agree along a common periodic part whose density equals $1-d$,
	\item as $x$ ranges over $\pi^{-1}(g)$, all possible $\{-1,1\}$-valued \sq s occur along the aperiodic part of $x$.
\end{enumerate}
In particular, if we arrange that $d>1-\frac3{\pi^2}$ ($\approx 0.7$) (which is easily done within the ``standard method'') then,
for every $g\in G'$ and $x\in\pi^{-1}(g)$ the set $\mathsf A = \Aper(x)\cap\{n\ge 0: \mmu(n)\neq 0\}$ has positive lower forward density at least $d_0=d+\frac6{\pi^2}-1> \frac3{\pi^2}$. There exists a point $x_0\in \pi^{-1}(g)$ such that for $n\in \mathsf A$ it equals $\mmu(n)$. It is obvious that (even in the ``worst case scenario'', when $x(i)=-\mmu(i)$ and $|\mmu(i)|=1$ everywhere on the periodic part of $x$) we still have
\begin{equation}\label{ll}
\liminf_n\frac 1n\sum_{i=1}^nx_0(i)\mmu(i) \ge d_0 - (1-d)>0.
\end{equation}

Next, we will show that points $x_0$ as constructed above (satisfying \eqref{ll}) lie densely in $X$.
Consider a basic open set $U$ in $X$, i.e., a cylinder corresponding to a block $B\in\{-1,1\}^{2k+1}$ occurring in $X$ at the coordinates $[-k,k]$.
By minimality, the same block occurs (perhaps at a different place) in the generating Toeplitz
\sq, which implies that the same block occurs somewhere in the periodic part of every element
of $X$, in particular in an element $x\in\pi^{-1}(G')$. Notice (directly from the definition)
that the set $G'$ is invariant (equivalently, $\pi^{-1}(G')$ is shift-invariant). Thus, by an appropriate shifting, we obtain a new point $x$ such that $B$ occurs in $x$ at the coordinates $[-k,k]$
and still belongs to the periodic part of $x$, and $g=\pi^{-1}(x)$ belongs to $G'$. Using this
particular $g$ in the above construction of $x_0$ we produce the point $x_0$ such that $x_0[-k,k]=B$
($x_0$ belongs to the same fiber as $x$ and thus agrees with $x$ along the periodic part, which
includes the coordinates $[-k,k]$). In other words, we have constructed a point $x_0\in U$ satisfying \eqref{ll}.

Next we observe that if we weaken \eqref{ll} by requiring that the \emph{upper} limit is larger than or equal to
a positive $\epsilon < d_0-(1-d)$, then it holds on a residual set. Indeed, we can write
\begin{multline*}
\left\{x: \limsup_n\frac 1n\sum_{i=1}^nx(i)\mmu(i) > \epsilon\right\} \subset \\ \bigcap_{m\ge 1}\bigcup_{n\ge m}\left\{x:\frac 1n\sum_{i=1}^nx(i)\mmu(i) > \epsilon\right\}\subset \\ \left\{x: \limsup_n\frac 1n\sum_{i=1}^nx(i)\mmu(i) \ge \epsilon\right\}.
\end{multline*}
The first set contains the dense set of points satisfying $\eqref{ll}$, the middle set is of type $G_\delta$ (hence it is a dense $G_\delta$) and thus the last set is residual. The example is completed.
$\blacksquare$
\medskip

Since Toeplitz \sq s form a residual subset inside a Toeplitz subshift, we conclude that

\begin{cor}\label{bad Toeplitz}
There exist (irregular) Toeplitz \sq s weakly correlated with the M\"obius function.
\end{cor}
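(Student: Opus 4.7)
The plan is to combine two residuality statements: one from the preceding example and one concerning Toeplitz sequences inside any Toeplitz subshift. Since a Toeplitz subshift is a compact metric space, the Baire category theorem will immediately yield the existence of a point in the intersection.

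First I would recall what was already extracted in the proof of Example~\ref{exem}: the set
$$
R=\left\{x\in X:\limsup_n\tfrac1n\sum_{i=1}^n x(i)\overline{\mmu(i)}>\epsilon\right\}
$$
contains a dense $G_\delta$, hence is residual in $X$, for a suitable positive $\epsilon<d_0-(1-d)$. This gives weak correlation on a topologically large set of points of $X$.

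Next I would show that the set $T\subset X$ of Toeplitz \sq s is also residual in $X$. By Theorem~\ref{dos}, a point $x\in X$ is a Toeplitz \sq\ precisely when $\pi^{-1}(\pi(x))=\{x\}$, where $\pi:X\to G$ is the factor map onto the maximal equicontinuous factor. Since \xt\ is an almost 1-1 extension of \gt\ (Theorem~\ref{uno}(2)), by the very definition of almost 1-1 the condition $\pi^{-1}(\pi(x))=\{x\}$ holds on a residual subset of $X$. Hence $T$ is residual.

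Finally, since $X$ is a compact metric space and therefore a Baire space, the intersection $R\cap T$ of two residual sets is residual, and in particular nonempty. Any point in $R\cap T$ is a Toeplitz \sq\ in the irregular Toeplitz subshift of Example~\ref{exem} that is weakly correlated with $\mmu$. There is no real obstacle here: the whole argument is a one-line Baire category application, and the only slight subtlety is recalling that ``regularly recurrent'' and ``singleton fibre under $\pi$'' coincide inside a Toeplitz subshift, which is exactly the content of Theorem~\ref{dos}.
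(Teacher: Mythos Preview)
Your approach is exactly the paper's: Example~\ref{exem} gives residuality of the weakly correlated points, Toeplitz sequences are residual in any Toeplitz subshift, and Baire does the rest. One small slip: what Example~\ref{exem} actually establishes is that the set $\{x:\limsup_n\frac1n\sum x(i)\mmu(i)\ge\epsilon\}$ is residual (it \emph{contains} the dense $G_\delta$), whereas the set $R$ with strict inequality was only shown to be \emph{contained in} that $G_\delta$; just replace $>\epsilon$ by $\ge\epsilon$ (or cite the conclusion of the example directly) and everything goes through.
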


We remark, that the Toeplitz subshift of the above example has positive entropy (equal to $d\ln2$), hence it stands in no collision with the Sarnak's conjecture.

\subsection{Strong failure}
The following example is replicated from \cite{Lem}.

\begin{exam}\label{exlem}
There exists a one-sided Toeplitz \sq\ strongly correlated with the M\"obius function.
\end{exam}

\noindent
We begin by describing a general scheme (used in \cite{Lem}) of producing a one-sided Toeplitz \sq\ from another symbolic \sq.
Let $y=(\text{y}_n)_{n\ge 1}$ be a one-sided \sq\ over a finite alphabet $\Lambda$. Let $H=(p_k)$ be a scale of an adding machine such that $p_1\ge 3$. Since $p_{k+1}\ge 2p_k$ for every $k$, this condition implies that $\rho=\sum_{k\ge 1}\frac1{p_k}<1$. We define the associated one-sided Toeplitz \sq\ $x$ as follows:
$$
x_y(n) =
\begin{cases} \text{y}_1,& n=1\mod p_1\\
\text{y}_2,& n=2\mod p_2\\
\vdots\\
\text{y}_{p_1},& n=p_1\mod p_{p_1}\\
\text{y}_{p_1+1},& n=p_1+2\mod p_{p_1+1}\\
\text{y}_{p_1+2},& n=p_1+3\mod p_{p_1+2}\\
\vdots\\
\text{y}_{2p_1-1},& n=2p_1\mod p_{2p_1-1}\\
\text{y}_{2p_1},& n=2p_1+3\mod p_{2p_1}\\
\vdots
\end{cases}
$$
We refrain from further detailed listing, as it becomes too complicated. The simple rule behind the scheme
is that $\text{y}_k$ is placed at the first position available after steps $1,2,\dots,k-1$ and then it is repeated periodically with the period $p_k$. This concludes the description of the scheme.
\medskip

For further considerations, it will be convenient to highlight, for each $k$, the first placement of $\text{y}_k$ in $x_y$ (it is shown in boldface, while its further periodic repetitions are printed in the normal font). While reading the following text and diagrams it is important to distinguish between boldface symbols $\mathbf{y}_k$ and normal font symbols $\text{y}_k$. The diagram below shows the filling scheme in case $p_k=3^k$ with the boldface terms marked.
$$_{
\mathbf{y_1}\mathbf{y_2}\mathbf{y_3}\text{y}_1\mathbf{y_4}\mathbf{y_5}\text{y}_1\mathbf{y_6}\mathbf{y_7}\text{y}_1\text{y}_2\mathbf{y_8}\text{y}_1\mathbf{y_9}\mathbf{y_{10}}\text{y}_1\mathbf{y_{11}}\mathbf{y_{12}}\text{y}_1\text{y}_2\mathbf{y_{13}}\text{y}_1\mathbf{y_{14}}\mathbf{y_{15}}\text{y}_1\mathbf{y_{16}}\mathbf{y_{17}}\text{y}_1\text{y}_2\text{y}_3\text{y}_1\mathbf{y_{18}}\mathbf{y_{19}}\text{y}_1\mathbf{y_{20}}\dots}
$$

\medskip\noindent
The authors of \cite{Lem} show that the lower density of the boldface symbols (which they call \emph{initials}) is at least $1-\rho$ (combining this with Lemma \ref{shiftz} (3) below we see that in fact these symbols have density $1-\rho$), which can be made arbitrarily close to 1. They select $y$ so that $x_y(n) = \mmu(n)$ whenever $x_y(n)$ is a boldface symbol. If $1-\rho> 1-\frac3{\pi^2}$ then, for the same reasons as in \eqref{ll} (with $1-\rho$ in the role of $d$), they obtain that $x$ is strongly correlated with $\mmu$. Of course, in view of Sarnak's conjecture, one is obliged to compute the \tl\ entropy of the generated Toeplitz subshift (at least to check whether it is positive). We will do so in Section \ref{entropy}. $\blacksquare$

\section{Properties of the scheme}

We are interested in properties of Toeplitz \sq s obtained through the above scheme for general \sq s $y$. In particular, we would like to know whether positive entropy follows automatically from positive entropy of the orbit closure $Y$ of~$y$. As we soon show, the answer is negative. This is quite unfortunate, because it forces us to estimate the entropy of the example of \cite{Lem} using tedious methods adapted the the particular example.
\medskip

So, consider a general \sq\ $y$ and the associated Toeplitz \sq\ $x_y$ with the boldface symbols marked. The following lemma addresses the distribution of the boldface symbols in $x_y$. The statement (5) will be used immediately in Example \ref{7.2}, while statement (4) only in Section \ref{entropy}. Statements (1) and (2) are necessary to prove (4), while (3) is just a digression noted in passing.

\begin{lem}\label{shiftz}
Let $z$ be the $\{0,1\}$-valued \sq\ given by the rule $z(n)=1\iff x_y(n)$ is a boldface symbol.
Then
\begin{enumerate}
	\item The frequency of zeros in the block $B_{k,0}=z[1,p_k]$ converges to $\rho$ from below, as 		
	$k\to\infty$.
	\item For every $k$ every block of the form $B_{k,j}=B_{k,j}[1,p_k]=z[jp_k+1,(j+1)p_k]$ ($n\ge 0$)
	can be obtained from $B_{k,0}$ by only replacing some $1$'s by $0$'s.
	\item The symbols $0$ in $z$ have lower Banach density $\rho$.
	\item Given $\epsilon>0$ there is a $\delta>0$ and $n_0\in\na$, such that, for any $n\ge n_0$, the 		
	cardinality of different blocks $B$ of length $n$, appearing in $z$ and in which the frequency of
	$0$'s is at most $\rho+\delta$, does not exceed $2^{n\epsilon}$.
	\item For every natural $m$, $z$ contains a block consisting of $m$ single symbols $1$ separated
	by blocks of zeros of lengths at least $m$. In particular, the upper Banach density of zeros in $z$ is
	$1$.
\end{enumerate}
\end{lem}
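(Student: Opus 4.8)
The plan is to reduce everything to a single combinatorial description of $z$, which depends only on the scale $H=(p_k)$. Enumerate the initials by their positions $a_1<a_2<\cdots$; by construction $a_i$ is the least position not covered by the progressions of the earlier symbols, and once it is placed one fills the whole one-sided progression $\{a_i+\ell p_i:\ell\ge0\}$. First I would record two structural facts. (i) These progressions are pairwise disjoint and partition $\na$: if $a_i+\ell p_i$ ($\ell\ge1$) were covered earlier by step $j<i$, then, as $p_j\mid p_i$, already $a_i\equiv a_j\pmod{p_j}$, contradicting that $a_i$ was uncovered. Hence $z(n)=1$ exactly when $n\in\{a_i\}$, and $z(n)=0$ exactly when $n=a_i+\ell p_i$ with $\ell\ge1$. (ii) ``Stable zeros'': if $z(n)=0$, say $n\equiv a_i\pmod{p_i}$ with $n>a_i$, then every $n'\equiv n\pmod{p_i}$ with $n'>a_i$ also satisfies $z(n')=0$. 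Writing $U_k=\bigcup_{j\le k}\{a_j+\ell p_j:\ell\ge0\}$ and $\alpha_k=\sum_{j\le k}p_j^{-1}$, the set $U_k$ is periodic modulo $p_k$ beyond $a_k$, of density $\alpha_k$, with $\alpha_k\nearrow\rho$.

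For (1) and (2): each step $j<k$ contributes to $z[1,p_k]$ exactly $p_k/p_j$ positions of its progression, one of which ($a_j$) is an initial, while steps $j\ge k$ contribute no repetition inside $[1,p_k]$ since their first repetition exceeds $p_k$. Hence the number of zeros in $B_{k,0}$ is $\sum_{j<k}(p_k/p_j-1)$, so its frequency of zeros equals $f_k:=\sum_{j<k}p_j^{-1}-(k-1)/p_k$, which increases to $\rho$ from below; this is (1). For (2) I argue the contrapositive: if $z(r)=0$ for $r\in[1,p_k]$, then $r$ is a repetition of some step $j<k$ (so $p_j\mid p_k$), whence $jp_k+r\equiv r\equiv a_j\pmod{p_j}$ and $jp_k+r>a_j$, giving $z(jp_k+r)=0$. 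Thus each $B_{k,j}$ arises from $B_{k,0}$ by turning some $1$'s into $0$'s.

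Part (3) follows by comparing arbitrary windows with the aligned blocks. The lower Banach density is at most the lower density, which by (1) equals $\rho$. Conversely, any interval of length $N$ contains at least $\lfloor N/p_k\rfloor-1$ complete aligned blocks $B_{k,j}$, and by (2) each of them has at least as many zeros as $B_{k,0}$, i.e. at least $p_k f_k$; dividing by $N$ and letting $N\to\infty$ gives lower Banach density $\ge f_k$ for every $k$, hence $\ge\rho$. For (4), fix $k$ and split the residues mod $p_k$ into the \emph{stable} ones, on which (by (ii)) $z$ is forced to $0$ for all arguments $>p_k$ (a fraction $\alpha_k$), and the remaining $(1-\alpha_k)p_k$ \emph{free} ones. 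For any $B=z[m+1,m+n]$ with $m\ge p_k$ all stable positions carry $0$, so $B$ is determined by the phase $m\bmod p_k$ together with the bits on the $\approx(1-\alpha_k)n$ free positions. If $B$ has at most $(\rho+\delta)n$ zeros, then, since the stable positions already account for $\approx\alpha_k n$ zeros, at most $(\rho+\delta-\alpha_k)n+O(p_k)=(\delta+\sum_{j>k}p_j^{-1})n+O(p_k)$ of the free positions are $0$. Hence the number of such blocks is at most $p_k$ times the number of binary patterns on $(1-\alpha_k)n$ places with that few zeros, i.e. at most $p_k\,2^{(1-\alpha_k)n\,H(\delta')}$, where $\delta'=(\delta+\sum_{j>k}p_j^{-1})/(1-\alpha_k)$ and $H$ is the binary entropy function. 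Choosing first $k$ large so that $\sum_{j>k}p_j^{-1}$ is tiny, then $\delta$ small, pushes the exponent below $\varepsilon/2$; the constant $p_k$ is absorbed for large $n$, giving the desired bound $2^{n\varepsilon}$.

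The core of (5) is the production of arbitrarily long runs of zeros. The key observation is that $a_{k+1}=\min(\na\setminus U_k)$, so every position in $[1,a_{k+1}-1]$ lies in $U_k$; shifting by $p_k$ and invoking (ii), the whole segment $z[p_k+1,\,p_k+a_{k+1}-1]$ consists of zeros (the initials $a_1,\dots,a_k$ in this range become repetitions after the shift, exactly as in (2)). As $a_{k+1}\to\infty$, this already yields $0$-runs of unbounded length and hence upper Banach density of zeros equal to $1$. The refined assertion — locating $m$ \emph{consecutive} initials with all $m-1$ separating gaps $\ge m$ — is the step I expect to be the main obstacle, since initials tend to cluster and a long $0$-run is typically bordered by a short one. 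I would approach it through the self-similar recursion $U_k=(U_{k-1}\text{ lifted mod }p_k)\cup\{a_k+p_k\na\}$, which removes exactly one residue class per level and lengthens the leading run, and then seek a high level $k$ and a block $B_{k,j}$ in which the surviving free positions are simultaneously sparse and mutually separated by stable $0$-runs of length $\ge m$; controlling the succession of $m$ such isolated survivors is precisely where the construction must be pushed, and is the part requiring the most care.
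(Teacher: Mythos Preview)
For parts (1)--(4) your argument is essentially the paper's. The zero count $\sum_{j<k}(p_k/p_j-1)$ in $B_{k,0}$, the propagation of zeros along residue classes mod $p_k$ for (2), the tiling of long windows by aligned blocks $B_{k,j}$ for (3), and the binomial/entropy count for (4) are exactly what the paper does; the paper phrases (4) as counting the ways to flip a few extra $1$'s to $0$'s in the periodic repetition of $B_{k,0}$, which is your binary-entropy bound in different words.

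Part (5) is where there is a genuine gap, and you say so yourself. Your shift-by-$p_k$ trick correctly produces a single $0$-run of length $a_{k+1}-1$, which settles the upper-Banach-density clause, but it does not yield $m$ \emph{successive} initials each flanked by $\ge m$ zeros, and the self-similar recursion you outline carries no mechanism for controlling several consecutive gaps at once.

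The missing idea, which the paper supplies, is an explicit inductive structural claim about the partially filled sequence after a suitable step $k_m$: at that moment $x_y$ begins with $m$ completely-filled blocks of \emph{strictly decreasing} lengths, separated by single unfilled cells (the first block contains all initials so far, the remaining $m-1$ are entirely normal-font). The passage $m\to m+1$ uses that the filled/unfilled pattern after step $k_m$ is $p_{k_m}$-periodic and that the whole configuration $C$ fits inside $[1,p_{k_m}]$; its translate $C'$ starting at $p_{k_m}+1$ therefore has the same shape but is all normal-font. One then runs further construction steps until $[1,p_{k_m}-1]$ is completely filled (the new periods exceed $2p_{k_m}$, so no repetition disturbs $C'$), obtaining a new long leading block, the single gap at position $p_{k_m}$, and then the $m$ blocks of $C'$: this is the $(m+1)$-pattern. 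Once the claim is in hand, the single gaps of $C$ are precisely the positions of the next batch of initials, and the strict decrease of the $m-1$ separating block lengths forces roughly the first half of these initials to be surrounded on both sides by normal-font runs of length $\ge m/2$; a final renaming of $m/2$ as $m$ gives the assertion. This ``staircase'' picture of decreasing normal-font blocks with single holes is the ingredient your sketch lacks.
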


\begin{rem}
(3) and (5) imply that the subshift generated by $z$ is not uniquely ergodic; at least one \im\
assigns to the cylinder of $0$ the value $\rho$ and at least one -- the value $1$ (perhaps there are more possibilities). (4) implies that every measure of the first kind has entropy zero. We have not verified whether this subshift has \tl\ entropy zero (regardless of the scale $H=(p_k)$).
\end{rem}

\begin{proof} For (1) it suffices to observe that the frequency of zeros in $B_{k,0}$ equals
$$
\tfrac1{p_k}\sum_{k'<k}(\tfrac{p_k}{p_{k'}}-1)=\Bigl(\sum_{k'<k}\tfrac1{p_k'}\Bigr)-\tfrac {k-1}{p_k}.
$$
Indeed, in step $k'\le k$, in $x_y[1,p_k]$ we have placed $\frac{p_k}{p_{k'}}$ symbols, of which one was boldface. Hence the formula.

\smallskip
For (2) note that $B_{k,0}(i)=0$ if and only if $x_y(i)$ is a normal font (i.e., repeated)
symbol $\text{y}_{k'}$ for some $k'$. This is possible only when $p_{k'}<p_k$. But then $p_k$ is a
multiple of $p_{k'}$ which implies that $B_{k,j}(i)=0$ for every $j$.
\smallskip

Clearly, (1) implies that the lower density (and lower Banach density) of zeros is at most $\rho$. By (2), the frequency of $0$'s in any $B_{k,j}$ may only be larger than that in $B_{k,0}$ (which is close to $\rho$). Every sufficiently long block $B$ in $z$ is a concatenation of the blocks $B_{k,j}$ and negligibly small prefix and suffix, so the frequency of $0$'s in $B$ is not less than $\rho$ minus a negligibly small error term. This proves (3).
\smallskip

For (4) we argue as above: every block $B$ of large length $n$, after removing negligibly small prefix and suffix, becomes a concatenation of the blocks $B_{k,j}$ (with a large parameter $k$). By (2), this concatenation can be viewed as a periodic repetition of $B_{k,0}$ with some $1$'s replaced by $0$'s.
But the number of replaced symbols $1$ cannot essentially exceed $n\delta$, otherwise the overall frequency of $0$'s would be too large. Such a replacement can be performed in approximately $e^{n(-\delta\ln\delta-(1-\delta)\ln(1-\delta))}$ different ways, which, for an appropriately small $\delta$, is smaller than $2^{n\epsilon}$.
\smallskip

For (5), we will need the following (somewhat lengthy)

\medskip
\noindent
{\bf Claim}. For every $m\ge 1$, after some number $k_m$ of steps of filling $x_y$
(i.e., after having placed the periodic repetitions of $\text{y}_1,\dots,\text{y}_{k_m}$), $x_y$ starts with the following configuration (later referred to as $C$): a continuous entirely filled block (with both boldface and normal font symbols) ending with the boldface $\mathbf{y}_{k_m}$ followed by a single unfilled coordinate, next a continuous block filled with normal font terms followed by a single unfilled position, next again a continuous block filled with normal font terms followed by a single unfilled position, and so on. The continuous filled blocks (including the first one) have strictly decreasing lengths and there is $m$ of them (see the diagram below for $m=6$, most of the indices are omitted). We do not require that the last unfilled position is single (it may be followed by more unfilled positions).
$$
\mathbf{y\!}_1\mathbf{y}\,\mathbf{y}\,\text{y}\,\mathbf{y}\,\mathbf{y}\,\text{y}\,\mathbf{y\!}_{k_m}\boxed{\vphantom{a}}\,\text{y}\,\text{y}\,\text{y}\,\text{y}\,\text{y}\,\text{y}\boxed{\vphantom{a}}\,\text{y}\,\text{y}\,\text{y}\,\text{y}\,\text{y}\boxed{\vphantom{a}}\,\text{y}\,\text{y}\,\text{y}\,\text{y}\boxed{\vphantom{a}}\,\text{y}\,\text{y}\,\text{y}\boxed{\vphantom{a}}\,\text{y}\,\text{y}\boxed{\vphantom{a}}
$$

\medskip
\noindent{\it Proof of the Claim.} For $m=1$ the condition is fulfilled after $k_1=1$ steps, so the induction starts. Suppose the claim holds for some $m\ge 1$. The pattern $C$ is repeated periodically with the period $p_{k_m}$ and the repetitions cannot overlap (because the lengths of the filled blocks are all different). This implies that the pattern $C$ is contained in $x_y[1,p_{k_m}]$. Let us move to the first repetition of the pattern $C$ further to the right (call it $C'$). It starts at the position $p_{k_m}+1$ and clearly, here all symbols are printed in normal font. Notice that at least two preceding positions: $p_{k_m}$ and $p_{k_m}-1$ are not occupied (because $p_{k_m}=0\mod p_k$ and $p_{k_m}-1=p_k-1\mod p_k$ for any $k\le k_m$, while the positions filled with the symbol $\text{y}_k$ have values $\mod$ $p_k$ positive and much smaller than $p_k$). Now we perform the construction steps $k_m+1$, $k_m+2$, etc., of filling in $x_y$, until we fill the position $p_{k_m}-1$ (with a boldface symbol $\mathbf{y\!}_{k'}$ for some $k'>k_m$). Notice that the pattern $C'$ reaches to at most the position $2p_{k_m}$, which is smaller than $p_{k_m+1}+1$, so the repeated (normal font) symbols added in these new steps fall to the right of $C'$ (i.e., they do not affect it). In this manner we fill all the unfilled positions within $x_y[1,p_{k_m}-1]$ creating (together with $C'$) a pattern as required for $m+1$ in the induction (with $k_{m+1}=k'$). Note that the new initial continuously filled block $x_y[1,p_{k_m}-1]$ has length $p_{k_m}-1$ larger than or equal to the length of the pattern $C'$ (perhaps without counting its last empty cell), in particular, for $m>1$, it is strictly longer than the first completely filled block of $C'$. For $m=1$ this also holds, because, in this case, the length of the first (and unique) filled block of $C'$ is 1, while $p_1-1>1$. The claim is thus proved.
\medskip

In the following construction steps, the unfilled positions in the pattern $C$ are filled with the boldface symbols $\mathbf{y}_{k_m+1}, \mathbf{y}_{k_m+2},\dots,\mathbf{y}_{k_m+m}$. Because the lengths of the separating normal font blocks strictly decrease, and there is $m-1$ of them, the first one has length at least $m-1$, the next one $m-2$ and so on. Thus, taking for simplicity $m$ to be even, we obtain that each of the boldface terms $\mathbf{y}_{k_m+1},\mathbf{y}_{k_m+2},\dots,\mathbf{y}_{k_m+\frac m2}$ is followed (and preceded) by a block of normal font symbols of length at least $\frac m2$. Renaming $\frac m2$ as $m$ ends the proof.
\end{proof}

It follows from the construction (or we can easily arrange it by choosing a sub\sq) that $k_{m+1}>k_m+m$ for each $m\ge 1$.

\begin{exam}\label{7.2}
There exists a one-sided symbolic \sq\ $y$ such that its orbit closure $Y$ has positive \tl\ entropy and supports many \im s, yet the associated Toeplitz subshift $X_y$ (the orbit closure of $x_y$) is strictly ergodic with entropy zero.
\end{exam}

\noindent
We just need to decide about the contents of the \sq\ $y$. Let $\Lambda=\{0,1\}$. For each $m\ge 1$ we  let $y[k_m+1,k_m+m]$ be a block $A_m$ and $y[k_m+m+1,k_{m+1}]$ be the block consisting entirely of zeros. We arrange that the \sq\ of blocks $(A_m)_{m\ge 1}$ generates a positive entropy subshift $Y_0$ with many \im s (for example, the full shift on two symbols). It is clear that the orbit-closure $Y$ of $y$ contains $Y_0$, hence has positive entropy and many \im s.
\medskip

Let us ignore ``accidental'' periodic repetitions of symbols in $x_y$. This is to say, we will denote
by $\text{Per}_k(x_y)$ the set of positions of the $p_{k'}$-periodic repetitions of the symbols $\text{y}_{k'}$ for $k'\le k$. The density of so defined $\text{Per}_k(x_y)$ is $\sum_{k'=1}^k\frac1{p_{k'}}$.

It follows from the general facts concerning Toeplitz subshifts, that if $\mu$ is an \im\ on $X_y$, then
$\mu$-almost every $x\in X_y$ has the ``non-accidental'' periodic part of density $\rho<1$, and the remaining part (which we denote by $\text{Aper}(x)$, although at the moment we only know it {\it contains} the true aperiodic part). Clearly, $\text{Aper}(x)$ is infinite as it has density $1-\rho$.

Suppose $\text{Aper}(x)$ contains two positions $n$ and $n+m$ such that $x(n)=x(n+m)=1$.
This implies that in $x_y$ there are infinitely many positions $n'$ such that $x(n')=x(n'+m)=1$
and $n'$ and $n'+m$ both belong to arbitrarily high periodic parts. This is to say, $x_y(n')=\text{y}_{k'}$ (or $\mathbf{y}_{k'}$) and $x_y(n'+m)=\text{y}_{k''}$ (or $\mathbf{y}_{k''}$), where $k',k''$ are arbitrarily large, for instance larger than both $k$ and $k_m$ and such that $p_{k'},p_{k''}>m$.
Suppose $k'<k''$ (the other case is symmetric). Shift (if necessary) the window $[n',n'+m]$ to the
left by a multiple of $p_{k'}$ so it
starts with the boldface symbol $\mathbf{y}_{k'}$. Say, this is now $[n'',n''+m]$. The position $n''+m$
cannot be occupied by $\text{y}_{k'''}$ with $k'''\le k'$ because then $n'+m$ would also be occupied by
the same $\text{y}_{k'''}$ (while it is by $\text{y}_{k''}$ or $\mathbf{y}_{k''}$ with $k''>k'$). This implies that $n''+m$ is occupied by some $\text{y}_{k'''}$ or $\mathbf{y}_{k'''}$ with $k'''>k'$.
But in such case, since $m$ is smaller than $p_{k'''}$, it must be the first occurrence, i.e., $\mathbf{y}_{k'''}$. Since $\mathbf{y}_{k'}=1$, $k'$ must belong to an interval $[k_{m'},k_{m'}+m']$ for some $m'$ and since $k'>k_m$, $m'$ must be larger than or equal to $m$. This implies that the occurrence of $\mathbf{y}_{k'}$ in $x_y$ (it occurs as $x_y(n'')$) is followed by a block of at least $m$ normal font symbols. This is a contradiction since we have just shown that $x_y(n''+m)$ is a boldface symbol.
\smallskip

We have proved that if $x$ has an infinite aperiodic part, this part is filled with zeros except perhaps
one 1. This immediately implies that $X_y$ has entropy zero and is strictly ergodic (in fact, it is isomorphic to the odometer, like the system of Example \ref{exe}).

\medskip

Although the above statement already captures the most important properties of the Toeplitz subshift $X_y$,
we have not yet guaranteed that
\begin{enumerate}
	\item the \sq\ $x_y$ is irregular with the density of a ``true'' aperiodic part equal to $1-\rho$,
	\item the odometer $(G,\tau)$ is a factor of $X_y$.
\end{enumerate}
All these features must be arranged separately, by delicate modifications of $y$, yet, which do not destroy what we have already achieved. We will only outline what needs to be done, skipping the tedious and not very interesting details.

We must realize that the construction steps of filling in $x_y$ corresponds to successively defining
the associated semicocycle as constant on some clopen subsets $C_k\subset G$. The positions of these subsets are determined by the scheme; they form a dense subset of $G$ and have jointly the Haar measure $\rho$.

For (1) we need to assure that the cocycle is \emph{discontinuous} at every point of the complementary set $D\subset G$. This can be done by making sure that we assign at least two different values in every neighborhood of every point of $D$, which can be achieved by modifying (if necessary) the values of $y$ along a very sparse sub\sq, so sparse that it would not affect other properties. Notice that every neighborhood of every point in $D$ contains infinitely many sets $C_k$, so we can choose an arbitrarily sparse subsequence of these sets which visits all such neighborhoods.

Likewise, for (2) we need the semicocycle to be invariant under no rotation. For this is suffices that
we arrange a discontinuity point ``unlike any other''. This can also be done by very sparse modifications of $y$. $\blacksquare$

\section{Entropy of the Example \ref{exlem}}\label{entropy}

We have eliminated the possibility of an ``automatic'' proof that the entropy in Example \ref{exlem}
is positive just based on the fact that $\mmu$ generates a subshift with positive \tl\ entropy.
On the other hand, in view of Sarnak's conjecture, we are obliged to check positivity of its topological entropy.

\begin{proof}[Proof of positivity of the entropy of Example \ref{exlem}]

For short, we will call the squares of prime numbers \emph{the p-squares}. They will be
denoted by $d_1,d_2, d_3,...$.
\medskip

Let $B$ denote an arbitrary block appearing in the subshift generated by $|\mmu|$. Let $n$ denote the
length of $B$, which we assume is large. Our goal is to indicate a place (an interval of $n$ consecutive coordinates) where $B$ occurs in $|\mmu|$ and in the Toeplitz \sq\ $x_y$ the number of boldface symbols is close to $n(1-\rho)$ (i.e., nearly realizes the upper Banach density of such symbols). Recall that $\rho$ is the sum of the inverses of the periods $p_k$ and is smaller than $\frac3{\pi^2}$. First we will argue
that finding such places (for all long enough blocks $B$) suffices for positivity of the \tl\ entropy of $X_y$.

Indeed, let $C$ denote the block appearing in $x_y$ over this interval. Then $B$ can be reconstructed knowing $C$ and two additional data: the positions of all the normal font symbols in $x_y$ in the considered interval and the contents of $|\mmu|$ at these positions. Since the number of normal font symbols is not larger than $n(\rho+\delta)$, Lemma \ref{shiftz} implies that there are at most $2^{n\epsilon}$ possibilities as to how the normal font symbols are distributed, and then there are
at most $2^{n(\rho+\delta)}$ possibilities as to their contents in $|\mmu|$. This produces the estimate
$$
\#\{B\} \le \#\{C\}\cdot 2^{n\epsilon}\cdot 2^{n(\rho+\delta)},
$$
where $\#\{B\}$ and $\#\{C\}$ denote the cardinalities of blocks of length $n$ in $|\mmu|$ and in $x_y$, respectively. Since $|\mmu|$ generates a subshift of entropy $\frac6{\pi^2}\log 2$, i.e., $\#\{B\}$ is nearly $2^{6n/\pi^2}$, the cardinality $\#\{C\}$ is (ignoring the small terms) nearly
$2^{n(6/\pi^2-\rho)}$, which yields positive \tl\ entropy of $X_y$ whenever $\rho<\frac6{\pi^2}$
(while we have assumed it is smaller even than half of that number).

\medskip
So, we focus on finding an interval, as specified at the start of the proof.
Find an interval $I$ of length $n$ where $B$ occurs in $|\mmu|$ (there is such). Positions of zeros in $I$ can be divided in two classes: first class -- coordinates divisible by any of the p-squares $d_1,\dots,d_L$, where $d_L$ is
the largest p-square smaller than $n$, and second class -- the remaining ones (which are divisible by
larger p-squares). Note that for each p-square larger than or equal to $n$ only one of its multiples can
occur in $I$.
Now by the Chinese Reminder Theorem (see e.g. \cite{N},Chap. I)
we can shift the interval $I$ (and call the shifted interval $I'$), so that
\begin{enumerate}
	\item the shift is by a multiple of $d_1d_2\cdots d_K$, where $d_K$ is the largest p-square smaller than
	$4n^2$, and
  \item zeros of the second class become (after shifting) divisible by some a priori selected large
  p-squares $e_1,...,e_q$ (the choice of these p-squares will be specified in a moment).
\end{enumerate}
The zeros of the first class appearing in $|\mmu|$ over the interval $I'$ are precisely the shifted zeros of the first class over $I$. All zeros of the second class occurring over $I$ correspond (via the shift)
to zeros of the second class occurring over $I'$, but the later interval can have more zeros of the second class (some new zeros divisible by p-squares larger than $4n^2$ and different from $e_1,e_2,\dots, e_q$ can occur here). The configuration of zeros inherited from $I$ is repeated in $|\mmu|$ in every interval $I''$ along an arithmetic progression starting with $I'$ and advancing with step
$$
M= d_1\cdots d_K\cdot e_1\cdots e_q,
$$
(this need not be the smallest step, just one which is sure). From now on we will observe only the
intervals $I''$ appearing along this progression. In every such interval the additional zeros
(if there are any) must be divisible by p-squares larger than or equal to $4n^2$. Using Lemmas \ref{staszek1} and \ref{staszek2}, provided at the end of the paper, one easily obtains, that the percentage (among the observed intervals)
of intervals where there are any additional zeros does not exceed
$$
n\left(1-\prod_{j> K}(1-\tfrac1{d_j})\right)\le\frac n{\sqrt{d_{K+1}}}\le \frac12.
$$
In other words, in at least around 1/2 of the observed intervals in $|\mmu|$ there occurs precisely the block $B$.

\medskip
It now suffices to arrange that majority (a bit over 1/2 is enough) of these intervals are such that
in the Toeplitz \sq\ $x_y$ there are nearly $n(1-\rho)$ boldface symbols.

Recall that $H=(p_k)$ is the scale used to construct $x_y$, and for each $k\ge 1$, $p_{k+1}$ is an \emph{essential} multiple of $p_k$ (at least times 2). We are going to mark three ``important points''
$k_1,k_2,k_3$ on the axis of the parameter $k$ (these points bear hidden dependence on $n$, not
visible in the denotation).
\smallskip

\begin{itemize}
\item[$k_1$:]  Let $k_1$ be the largest $k$ such that $p_k<n$. Notice that $p_k\ge 2^k$ implies
$k_1<\ln n/\ln 2$, which (for large enough $n$) is smaller than $[n/\ln n]$.
\end{itemize}

\smallskip
Next ``important points'' require auxiliary functions. Let $N_k$ be the number of prime factors (in the meaning sum of their multiplicities) of the largest common divisor of $p_k$ and $d_1\cdots d_K$. The function $k\mapsto N_k$ is nondecreasing and becomes constant before it reaches
$2K+1$. This implies it eventually lies below the line $k/2$.

\begin{itemize}
\item[$k_2$:] Let $k_2$ be the largest $k$ (if such exists), for which $N_k\ge k/2$. Notice that
$N_k\le 2K$ implies $k_2\le 4K$, which (for $n$ large) is small compared to $n$ (equal to four times
the number of primes smaller than $2n$, i.e., approximately $8n/\ln 2n$). At this point we agree
that if $k_2<8n/\ln 2n$ (or does not exist at all), then we put $k_2 = [8n/\ln 2n]$. In particular,
this guarantees that $k_2>k_1$.
\end{itemize}

\begin{itemize}
\item[$k_3$:] Let $k_3$ be some place, not smaller than $k_2$, where the function $N_k$ has already reached its maximum. At this moment we define the numbers $e_1,...,e_q$, to be relatively prime with $p_{k_3+4q}$. We can do it now, because the particular values of $e_1,\dots, e_q$ have not been used in defining the preceding points.
\end{itemize}

 Now consider a similar auxiliary function $M_k$ defined analogously as $N_k$ with $M$ in place of $d_1\cdots d_K$. Note that to the right of $k_3+4q$ the function $N_k$ does not grow, while $M_k$ may increase by at most $2q$ (possibly even in one jump), however, thank to the specific choice of $e_1,...,e_q$, this function will never again cross the line $k/2$.

The ``important points'' are shown on the figure below
\begin{figure}[ht]\label{fig4}
\begin{center}
\includegraphics[width=12cm]{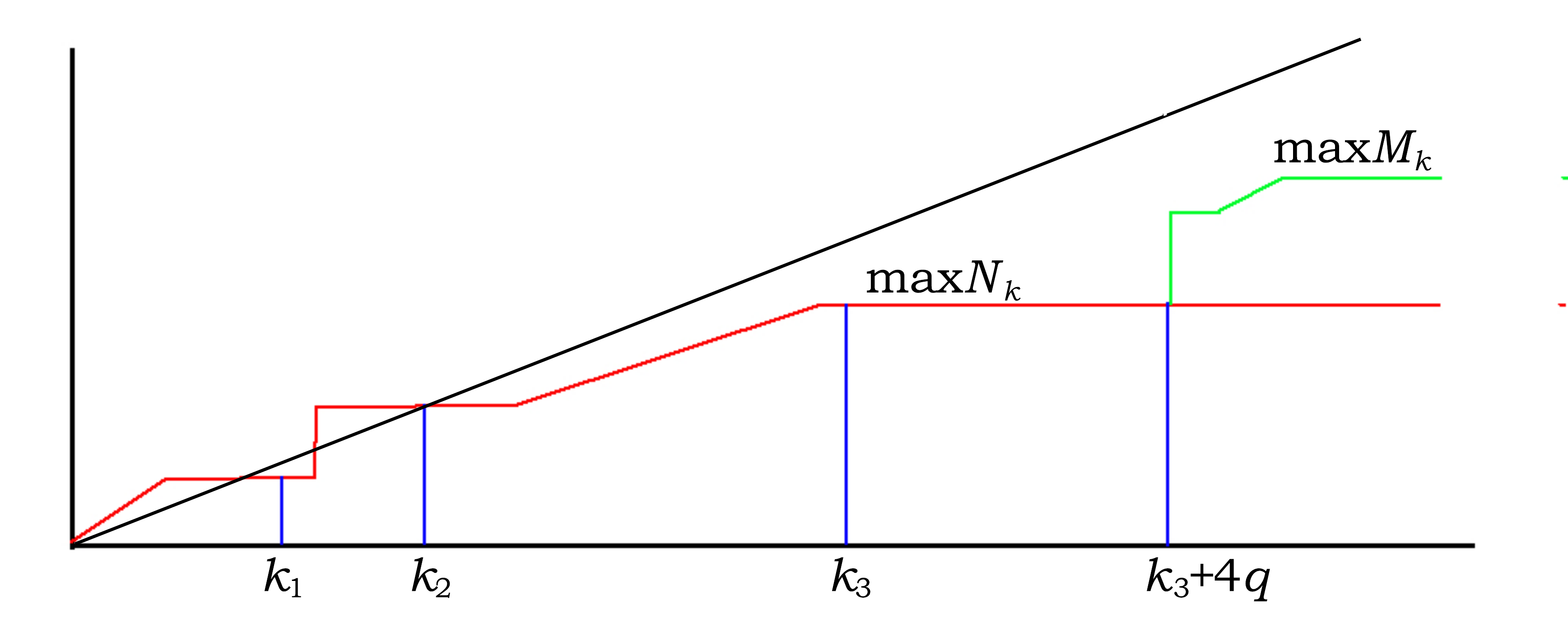}
\caption{\small The function $N_k$ is shown in red, and $M_k$ in green.}
\end{center}\end{figure}

The positions of normal font symbols in $x_y$ (equivalently, of zeros in $z$) we divide into arithmetic
progressions with periods $p_k$ (along such a progression $x_y(i) = \text{y}_k$) and we classify these
progressions in three groups.
\begin{enumerate}
	\item $k=1,\dots,k_1$
	\item $k=k_1+1,\dots,k_2$
\item $k>k_2$
\end{enumerate}
(The number $k_3$ is needed only to define $e_1,\dots,e_q$ and is not used to separate the groups.)
We will now analyze the progressions according to this classification.

\smallskip\noindent
1. Each progression from the first group occupies in every interval of length $n$ approximately $n/p_k$
positions (at most $2n/p_k$) for the largest $k$ in the group). The union of these progressions occupies not more than $n(\rho+\epsilon)$ (for large enough $n$).

\smallskip\noindent
2. Every progression from the second group is represented in every interval of length $n$ by at most
one coordinate. Jointly these groups occupy at most $4K$ positions, which does not essentially exceed $8n/\ln 2n$, i.e., negligibly little compared to $n$. \medskip

So far we have been estimating the number of normal font symbols in an interval of length $n$, and so far
it came out close to $n\rho$ (i.e., as we need it). From now on we will estimate the percentage of the
``observed'' intervals $I''$ which are disjoint from the progressions belonging to the third class.

3. Consider a $k$ in the third group. The period $p_k$ has with $M$ no more than $k/2$ common prime divisors. However, $p_k$ has at least $k$ prime factors, each equal to at least~$2$. This implies that $p_k/gcd(p_k,M)\ge 2^{k/2}$.
Applying Lemma \ref{nowy} we conclude that  the percentage of intervals $I''$ intersecting the progressions from the third group does not exceed $n\sum_{k>k_2}2^{-k/2}$.
Since $k_2\ge 8n\log 2n$, this estimate is arbitrarily small for large $n$.



To summarize, we can arrange that more than half of the observed intervals $I''$ do not intersect
any of the progressions from the third and fourth groups. In such intervals $|\mu|$ reads $B$.
Combining this with a previous estimate we obtain that there exist intervals in which both $|\mmu|$ reads $B$ and in $x_y$ there are nearly $n(1-\rho)$ boldface symbols, as required. This completes the proof.
\end{proof}

Now the missing lemmas.

\begin{lem}\label{nowy}
Consider a collection of arithmetic progressions of natural numbers $A_j=\{kp_j+r_j: k\in\na\}$, where $r_j\ge 0$ and $p_j/j\rightarrow \infty$ as $j\rightarrow \infty$. Let $M$ be a natural number and $r$ a nonnegative integer. The upper density of the set
$$
\{k\in\na:kM+r\in\bigcup_{j\ge 1}A_j\}
$$
is less than or equal to $\sum_{j\ge 1}\frac{1}{p_j'}$, where $p_j'=\frac{p_j}{gcd(p_j,M)}$ for $j\ge 1$.
\end{lem}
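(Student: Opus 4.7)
The plan is a union bound after truncating to a finite subfamily: one observes that only finitely many progressions can meet $[1,N]$, that the number $J(N)$ of such progressions is $o(N)$ under the hypothesis $p_j/j\to\infty$, and that the residual $+1$ errors in the union bound are then absorbed by this truncation.

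First I would note that each set $\tilde A_j := \{k\in\na : kM+r\in A_j\}$, if nonempty, lies in a single residue class modulo $p_j'$. Indeed, the relation $kM+r=k'p_j+r_j$ gives the congruence $kM\equiv r_j-r\pmod{p_j}$, which is solvable iff $\gcd(p_j,M)$ divides $r_j-r$, and in that case its solutions form an arithmetic progression with common difference $p_j/\gcd(p_j,M)=p_j'$. Consequently $|\tilde A_j\cap[1,N]|\le N/p_j'+1$ whenever the intersection is nonempty.

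The crucial step is to bound the number of indices $j$ for which $\tilde A_j\cap[1,N]$ is nonempty. Since every element of $A_j$ is at least $p_j$, $k\in\tilde A_j\cap[1,N]$ forces $p_j\le kM+r\le NM+r$. Writing $J(N):=\#\{j:p_j\le NM+r\}$, the hypothesis $p_j/j\to\infty$ yields $J(N)=o(N)$: given any $K>0$, pick $j_0$ with $p_j>Kj$ for all $j>j_0$; then $p_j\le NM+r$ forces $j\le\max(j_0,(NM+r)/K)$, so $J(N)\le j_0+(NM+r)/K$, and $\limsup_N J(N)/N\le M/K$. Letting $K\to\infty$ gives $J(N)/N\to 0$.

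Assembling the pieces by a straightforward union bound,
\[
\#\bigl\{k\in[1,N]:kM+r\in\textstyle\bigcup_j A_j\bigr\}\;\le\!\sum_{j:p_j\le NM+r}\!\left(\frac{N}{p_j'}+1\right)\le\;N\sum_{j\ge 1}\frac{1}{p_j'}\;+\;J(N),
\]
so dividing by $N$ and passing to $\limsup$ delivers the claimed bound on the upper density (the statement being vacuous when $\sum 1/p_j'$ diverges). The main obstacle is precisely the $J(N)=o(N)$ estimate, for which the hypothesis $p_j/j\to\infty$ is exactly what is needed: without it, the naive union bound would accumulate one $+1$ error per nonempty fiber, of total size potentially comparable to $N$, and the inequality would degenerate to the trivial bound.
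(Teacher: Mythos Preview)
Your proof is correct and follows essentially the same approach as the paper's: bound each fibre $\tilde A_j\cap[1,N]$ by $N/p_j'+1$, truncate to those $j$ with $p_j\le NM+r$, and use $p_j/j\to\infty$ to show the number $J(N)$ of surviving indices is $o(N)$, absorbing the accumulated $+1$ errors. Your version is in fact slightly more careful than the paper's (you define $J(N)$ as a cardinality rather than a maximal index, which avoids an implicit monotonicity assumption on the $p_j$, and you spell out the $J(N)=o(N)$ estimate explicitly), but the argument is the same.
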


\begin{proof} Given $n$ let $J_n$ denote the maximal number $j$ such that $nM+r\ge p_j$.
Note that for any $n$ and $j$:
$$
|\{k\in\na: k\le n, kM+r\in A_j\}|\le\frac{n}{p'_j}+1
$$
and the set on the left hand side is empty if $j>J_n$.
It follows that
$$
\begin{array}{l}
\frac{1}{n}|\{k\in\na: k\le n, kM+r\in \bigcup_{j\ge 1}A_j\}|=\frac{1}{n}|\{k\in\na: k\le n, kM+r\in \bigcup_{j=1}^{J_n}A_j\}|\le\\
\frac{1}{n}\sum_{j=1}^{J_n}|\{k\in\na: k\le n, kM+r\in A_j\}|\le \frac{1}{n}((\sum_{j=1}^{J_n}\frac{n}{p'_j})+J_n)
\end{array}
$$
Thanks to our assumption on $p_j$,  $\frac{J_n}{n}\rightarrow 0$ as $n\rightarrow \infty$, thus
the assertion follows.
\end{proof}

\begin{lem}\label{staszek1}
In the family of all subsets of $\na$ (or of $\Z$) which have well defined density, density can be viewed
as a finitely additive probability measure. Then any finite collection of periodic sets with relatively
prime periods is stochastically independent.
\end{lem}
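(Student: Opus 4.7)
The plan is to verify two essentially independent assertions. First, I would show that on the family $\mathcal D$ of subsets of $\Z$ (equivalently $\na$) for which $\dens$ exists, density behaves as a finitely additive probability measure. The elementary inclusion--exclusion identity
\begin{equation*}
\#\bigl((A\cup B)\cap[-n,n]\bigr) = \#\bigl(A\cap[-n,n]\bigr) + \#\bigl(B\cap[-n,n]\bigr) - \#\bigl((A\cap B)\cap[-n,n]\bigr)
\end{equation*}
shows that whenever three of the four sets $A$, $B$, $A\cup B$, $A\cap B$ lie in $\mathcal D$, the fourth does too, and density is additive on disjoint unions. Combined with $\dens(\Z)=1$ and $\dens(A^c)=1-\dens(A)$, this gives the finitely additive probability structure. (The family $\mathcal D$ is not a $\sigma$-algebra, but for finitely many periodic sets all the Boolean combinations are again periodic, hence in $\mathcal D$, so the formalism applies.)

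Second, for the independence statement, the main tool is the Chinese Remainder Theorem. If $A_1,\dots,A_k$ are periodic with pairwise coprime periods $p_1,\dots,p_k$, each $A_j$ is determined by a set of residues $R_j\subset\Z/p_j\Z$ via $A_j=\{n\in\Z:n\bmod p_j\in R_j\}$, and $\dens(A_j)=|R_j|/p_j$. The intersection $A_1\cap\dots\cap A_k$ is periodic with period $P=p_1\cdots p_k$, determined by those residues $r\in\Z/P\Z$ whose reduction modulo $p_j$ lies in $R_j$ for every $j$. By the CRT, the canonical homomorphism $\Z/P\Z\to\prod_j\Z/p_j\Z$ is a bijection, so the number of such $r$ equals $\prod_j|R_j|$, whence
\begin{equation*}
\dens(A_1\cap\dots\cap A_k)=\frac{\prod_j|R_j|}{P}=\prod_j\frac{|R_j|}{p_j}=\prod_j\dens(A_j),
\end{equation*}
which is precisely joint stochastic independence. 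Applying the same argument to any sub-collection (which again has coprime periods) yields independence in the full probabilistic sense.

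There is essentially no obstacle in the proof; the only point requiring mild care is to make sure ``finitely additive probability measure'' is interpreted on the family $\mathcal D$, which is closed under complementation and under disjoint union and intersection of pairs whose Boolean combinations happen to have densities, rather than as a measure on some ambient $\sigma$-algebra. Since we only invoke this structure for periodic sets (whose Boolean combinations are again periodic, hence possess densities), this mild restriction is harmless.
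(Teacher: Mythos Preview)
Your proof is correct and follows essentially the same approach as the paper's: both arguments rest on the Chinese Remainder Theorem to show that the intersection of periodic sets with coprime periods has density equal to the product of the individual densities. The only cosmetic difference is that the paper first decomposes each periodic set into a disjoint union of arithmetic progressions and then observes that an intersection of progressions with coprime steps is again a single progression (hence has density $1/(p_1\cdots p_k)$), whereas you work directly with the residue sets $R_j\subset\Z/p_j\Z$ and count via the CRT bijection $\Z/P\Z\cong\prod_j\Z/p_j\Z$; your treatment of the finitely additive structure is also more explicit than the paper's, which takes it for granted.
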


\begin{proof}
Since every periodic set with period $p$ decomposes as a disjoint union of finitely many arithmetic progressions with step $p$, it suffices to prove the lemma for arithmetic progressions (notice that
the density of a progression with step $p$ equals $\frac1p$). So, let $A_1, A_2,\dots,A_k$ be arithmetic progressions with steps $p_1,p_2,\dots,p_k$. We need to show that the density of their intersection
equals $\frac1{p_1p_2\dots p_k}$. This, however is obvious, because due to the relative primeness, this intersection is an arithmetic progression with step $p_1p_2\dots p_k$.
\end{proof}

\begin{lem}\label{staszek2}
For any $k\ge 1$ we have
$$
\prod_{j\ge k}(1-\tfrac 1{d_j})\ge 1-\tfrac1{\sqrt{d_k}}.
$$
\end{lem}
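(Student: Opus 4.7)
My plan is to reduce the product bound to a sum bound via the elementary inequality
$$
\prod_{j\ge k}(1-a_j)\ \ge\ 1-\sum_{j\ge k}a_j \qquad (a_j\in[0,1]),
$$
which follows by a one-line induction. Applied with $a_j=1/d_j=1/p_j^2$ (where $p_j$ denotes the $j$-th prime, so that $d_j=p_j^2$ and $\sqrt{d_k}=p_k$), the claim reduces to the arithmetic estimate
$$
\sum_{j\ge k}\frac{1}{p_j^2}\ \le\ \frac{1}{p_k}.
$$

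For $k\ge 2$ we have $p_k\ge 3$, so the primes $p_k<p_{k+1}<\dots$ are all odd and distinct, forcing $p_{k+i}\ge p_k+2i$ for every $i\ge 0$. Hence by the integral test
$$
\sum_{j\ge k}\frac{1}{p_j^2}\ \le\ \frac{1}{p_k^2}+\sum_{i\ge 1}\frac{1}{(p_k+2i)^2}\ \le\ \frac{1}{p_k^2}+\int_0^\infty\!\frac{dt}{(p_k+2t)^2}\ =\ \frac{1}{p_k^2}+\frac{1}{2p_k},
$$
and this is at most $1/p_k$ precisely because $p_k\ge 2$. So the $k\ge 2$ case is routine.

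For $k=1$ the required bound $\sum 1/p^2\le 1/2$ is numerically very tight (the actual value is $\approx 0.4522$), and elementary tail estimates only just fail; so here I would bypass the union-bound step and invoke Euler's product formula directly,
$$
\prod_{j\ge 1}\bigl(1-\tfrac{1}{p_j^2}\bigr)\ =\ \frac{1}{\zeta(2)}\ =\ \frac{6}{\pi^2}\ >\ \frac12\ =\ 1-\frac{1}{\sqrt{d_1}},
$$
where the last inequality is $\pi^2<12$. The only (mild) obstacle is this base case $k=1$, where the bound is sharpest; all other cases are handled by the single calculus estimate above, and both pieces combine to the stated inequality for every $k\ge 1$.
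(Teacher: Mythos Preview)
Your proof is correct, but it takes a genuinely different route from the paper's. The paper does not pass to a sum bound at all; instead it enlarges the product over primes to a product over \emph{all} integers $n\ge q_k$ and then telescopes:
\[
\prod_{j\ge k}\Bigl(1-\tfrac1{q_j^2}\Bigr)\ \ge\ \prod_{n\ge q_k}\Bigl(1-\tfrac1{n^2}\Bigr)
=\lim_{N\to\infty}\frac{q_k-1}{q_k}\cdot\frac{N+1}{N}=1-\tfrac1{q_k}.
\]
This single identity handles every $k\ge1$ uniformly, with no case split and no appeal to the value of $\zeta(2)$. Your argument, by contrast, is forced to treat $k=1$ separately because the union bound $\prod(1-a_j)\ge 1-\sum a_j$ is too lossy there (indeed $\sum 1/p^2\approx0.4522$ sits very close to $1/2$), and you recover that case via the Euler product $\prod_p(1-1/p^2)=6/\pi^2$. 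So the paper's trick is both shorter and more self-contained; the trade-off is that your sum-based approach is perhaps more robust if one later needs quantitative slack for $k\ge2$, since it actually gives the stronger bound $1-\tfrac1{2p_k}-\tfrac1{p_k^2}$ in that range.
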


\begin{proof}
Recall that $d_j=q_j^2$, where $q_j$ denote the consecutive primes. Thus
\begin{multline*}
\prod_{j\ge k}(1-\tfrac 1{d_j})= \prod_{j\ge k}(1-\tfrac 1{q^2_j})=
\lim_{N\to\infty}\prod_{j=k}^N(1-\tfrac 1{q^2_j})\ge \lim_{N\to\infty}\prod_{n=q_k}^N(1-\tfrac 1{n^2}) = \\
\lim_{N\to\infty}\frac{q_k-1}{q_k}\cdot\frac{N+1}N = 1-\tfrac1{q_k}= 1-\tfrac1{\sqrt{d_k}}.
\end{multline*}
\end{proof}


\begin{thebibliography}{99}
\bibitem[B1]{B1} J. Bourgain,
\emph{Moebius-Walsh correlation bounds and an estimate of Mauduit and Rivat}, J.
d'Anal. Math.
{\bf 119}
(2013), 147--163,  	arXiv:1112.1423

\bibitem[B]{B} J. Bourgain, \emph{
On the correlation of the Moebius
function  with rank-one system},
Journal d'Anal. Math. {\bf 120} (2013)
105--130,  arXiv:1112.1032.

\bibitem[BSZ]{BSZ}  J. Bourgain, P. Sarnak, T. Ziegler,
\emph{Disjointness of M\"obius from horocycle flows}, in: "From
Fourier and Number Theory to Radon Transforms and Geometry", in memory of Leon Ehrenpreiss, Developments in Mathematics,
{\bf 28} (2012), 67–83, Springer Verlag. arXiv:1110.0992.

\bibitem[CDM]{CDM} H. Cohen, F. Dress and M. El Marraki, \emph{Explicit estimates for summatory functions linked to the M\"obius $\mu$-function},
  Funct. Approx. Comment. Math.
    {\bf 37} (2007), 51--63.

    \bibitem[Da]{Da} H. Davenport,
\emph{On some infinite series involving arithmetical functions. II}, Quart. J. Math. Oxf.
{\bf 8}
(1937), 313-–320

\bibitem[D]{survey}
  T. Downarowicz,
  \emph{Survey of odometers and Toeplitz flows}, Contemporary
Mathematics, Algebraic and Topological Dynamics (Kolyada, Manin, Ward
eds), {\bf 385} (2005), 7--38


\bibitem[DL]{D-L}
	T. Downarowicz and Y. Lacroix \emph{Almost 1-1 extensions of Furstenberg-Weiss type},
	Studia Math. {\bf 130} (1998), 149--170


\bibitem[AKL]{Lem}
  H. El Abdalaoui, S. Kasjan and M. Lema\'nczyk,
  \emph{0-1 sequences of the Thue-Morse type and Sarnak's conjecture}, preprint, arXiv:1304.3587v2

  \bibitem[ALR]{ALR} H. El Abdalaoui, M. Lema\'nczyk and T. de la Rue,
\emph{On spectral disjointness of powers for
rank-one transformations and M\"obius orthogonality}, J. Functional Analysis (to appear),
arXiv:1301.0134


\bibitem[GL]{GL} A.O. Gelfond and Yu. V. Linnik, \emph{Elementary methods in the analytic theory of numbers.}  International Series of Monographs in Pure and Applied Mathematics, Vol. 92 Pergamon Press,  1966

\bibitem[G]{G} B. Green,
\emph{On (not) computing the M\"obius function using bounded depth circuits}, Combin.
Probab. Comput.
{\bf 21}
(2012), 942--951

\bibitem[GT]{GT} B. Green and T. Tao,
\emph{The M\"obius function is strongly orthogonal to nilsequences}, Annals Math.
{\bf 175}
(2012), 541--566


\bibitem[HS]{HaSu} K. Halupczok and B. Suger.  \emph{Partial sums of the M\"obius function in arithmetic progressions assuming GRH},  Funct. Approx. Comment. Math. {\bf 48}  (2013), 61--90

\bibitem[KL]{KL} J. Ku\l aga-Przymus and M. Lema\'nczyk,
\emph{The M\"obius function and continuous
T-extensions of
rotations}, preprint,  	arXiv:1310.2546

\bibitem[L]{Leh}
  E. Lehrer
  \emph{Topological Mixing and Uniquely Ergodic Systems},
  Israel J. of Math. {\bf 57} (1987), 239--255

\bibitem[LS]{LS}Y. Liu, P. Sarnak,
\emph{The M\"obius function and distal flows}, arXiv:1303.4957

\bibitem[MR]{MR} C. Mauduit, J. Rivat,
\emph{Prime numbers along Rudin-Shapiro sequences}
(2013),
http://iml.univ-mrs.fr/\~rivat/preprints/PNT-RS.pdf


\bibitem[M]{mobius} A.F. M\"obius, \emph{Uber eine besondere Art von Untersuchrung des Reihen}, J. reine
Angew. Math. {\bf 9} (1832), 105--123

\bibitem[MH]{MH} M. Morse, G. A. Hedlund, \emph{Symbolic Dynamics II: Sturmian Trajectories},  Amer. J. Math. {\bf 62} (1940), 1--42


\bibitem[N]{N}
  W. Narkiewicz,
  \emph{Number theory}, World Scientific, Singapore, 1977

\bibitem[P]{P}
  K. Prachar \emph{Primzahlverteilung},  Springer-Verlag, 1957


\bibitem[RR]{RaRu} O. Ramar\'e and R. Rumely, \emph{Primes in arithmetic progressions}, Math.   Comp.
{\bf 65} (1996), 397--425

\bibitem[Sa]{S}
  P. Sarnak, \emph{Three Lectures on the M\"obius Function
Randomness and Dynamics} http://www.math.ias.edu/files/wam/2011/PSMobius.pdf



\bibitem[Sch]{Sch}  L. Schoenfeld, \emph{An improved estimate for the summatory function of the
M\"obius function}, Acta Arithmetica 15 (1969), 221--233

 I. Vinogradov, Some theorems concerning the theory of primes.
Recueil Math. 2, 179–195,
1937b.


\bibitem[W]{Wa} A. Walfisz, \emph{Weylsche Exponentialsummen in der neueren Zahlentheorie.}  Mathematische Forschungsberichte, XV. VEB Deutscher Verlag der Wissenschaften, Berlin 1963

\end{thebibliography}
\end{document}